\renewcommand{\norm}[1]{\| {#1} \|}
\theoremstyle{plain}
\newtheorem{theorem}{Theorem}[section]
\newtheorem{proposition}[theorem]{Proposition}
\newtheorem{lemma}[theorem]{Lemma}
\theoremstyle{remark}
\newtheorem{remark}[theorem]{Remark}
\theoremstyle{definition}
\numberwithin{equation}{section}
\newcommand{\supp}{\mathrm{supp}}
\begin{document}
\title[Regularity of stable solutions to the MEMS problem]{Regularity of stable solutions to the MEMS problem up to the optimal dimension 6}

\author[R. Bruera]{Renzo Bruera}
\address{R. Bruera \textsuperscript{1}
\newline
\textsuperscript{1}
Universitat Politècnica de Catalunya, Departament de Matem\`{a}tiques, Av. Diagonal 647, 08028 Barcelona.}
\email{renzo.bruera@upc.edu}

\author[X. Cabr\'e]{Xavier Cabr\'e}
\address{X. Cabr\'e \textsuperscript{1,2,3}
\newline
\textsuperscript{1} 
ICREA, Pg. Lluis Companys 23, 08010 Barcelona, Spain
\newline
\textsuperscript{2} 
Universitat Polit\`ecnica de Catalunya, Departament de Matem\`{a}tiques and IMTech, Av. Diagonal 647, 08028 Barcelona, Spain
\newline
\textsuperscript{3} 
Centre de Recerca Matem\`atica, Edifici C, Campus Bellaterra, 08193 Bellaterra, Spain.}
\email{xavier.cabre@upc.edu}

\date{\today}
\thanks{Renzo Bruera is supported by the Spanish Ministry of Universities through the national program FPU (reference FPU20/07006). Renzo Bruera and Xavier Cabr\'e are supported by the Spanish grants PID2021-123903NB-I00 and RED2022-134784-T
funded by MCIN/AEI/10.13039/501100011033 and by ERDF “A way of making Europe”, and by the Catalan grant 2021-SGR-00087. This work is supported by the Spanish State Research Agency, through the Severo Ochoa and Mar\'{\i}a de Maeztu Program for Centers and Units of Excellence in R\&D (CEX2020-001084-M)}
\begin{abstract}
In this article we address the regularity of stable solutions to semilinear elliptic equations $-\Delta u = f(u)$ with MEMS type nonlinearities. More precisely, we will have $0\leq u \leq 1$ in a domain $\Omega \subset \mathbb{R}^n$ and $f:[0,1)\to (0,+\infty)$ blowing up at $u=1$ and nonintegrable near 1. In this context, a solution $u$ is regular if $u<1$ in all $\Omega$ or, equivalently, if $-\Delta u = f(u)<+\infty$ in $\Omega$.

This paper establishes for the first time interior regularity estimates that are independent of the boundary condition that $u$ may satisfy. Our results hold up to the optimal dimension $n=6$ (there are counterexamples for $n\geq 7$) but require a Crandall-Rabinowitz type assumption on the nonlinearity $f$. Our main estimate controls the $L^\infty$ norm of $F(u)$ in a ball, where $F$ is a primitive of $f$, by only the $L^1$ norm of $u$ in a larger ball.

Under the same assumptions, we also give global estimates in dimensions $n\leq 6$ for the Dirichlet problem with vanishing boundary condition, improving previously known results. For $n\leq 2$, we do not need a Crandall-Rabinowitz type assumption and, thus, our global estimate holds for all nonnegative, nondecreasing, convex nonlinearities which blow up at 1 and are nonintegrable near 1.
\end{abstract}

\subjclass[2020]{35B45, 35B65, 35J15, 35J61}

\maketitle 

\section{Introduction}

\subsection{Overview of the setting and results}
This paper concerns nonnegative stable solutions to the semilinear elliptic equation
\begin{equation}\label{problem_statement_no_boundary}
    -\Delta u = f(u) \quad \text{in }\Omega,
\end{equation}
where $\Omega\subset \mathbb{R}^n$ is a smooth bounded domain, $u:\overline{\Omega}\to \mathbb{R}$, $f:[0,1)\to \mathbb{R}$ is nonnegative, of class $C^2([0,1))$, and satisfies
\begin{equation}\label{hypotheses_on_f}
    \int_{0}^1 f = +\infty.
\end{equation}
As a consequence, if $f$ is nondecreasing (as in most of our main results), then $f(1)=+\infty$, that is, $f$ is {\em singular} at 1. The model nonlinearities for this problem are power functions $f(u)=(1-u)^{-p}$ with $p\geq 1$. When $f(u)=(1-u)^{-2}$, this equation models the deflection of a dielectric elastic membrane in a microelectromechanical system (MEMS). MEMS are devices in which, essentially, an elastic membrane is held above a fixed grounded plate. Then, a certain voltage is applied to the membrane, which makes it deflect downwards. The function $u$ measures the deflection of the membrane from its original position. If $u$ attains the value 1, it means that the membrane is touching the plate, thus creating the so-called pull-in voltage effect, which occurs when the electric force overtakes the elastic force, and then the device stops working. For a more thorough description of these devices, we refer the interested reader to \cite{EspositoGhoussoubGuo_Book}.

Denoting by $F(u)=\int_0^u f(t)\dd t$, \eqref{problem_statement_no_boundary} is the Euler-Lagrange equation of the functional 
\begin{equation*}
    E(v) = \int_\Omega \left \lbrace \frac{1}{2}\abs{\nabla v}^2 - F(v)\right \rbrace \dd x .
\end{equation*}
Hence, solutions $u$ of \eqref{problem_statement_no_boundary} are critical points of $E$. We say that a solution $u$ is {\em stable} if the second variation of $E$ at $u$ is nonnegative, that is, if
\begin{equation}\label{stability_definition}
    \derivative[2]{}{\varepsilon}\Big \vert_{\varepsilon=0} E(u+ \varepsilon \xi)  = \int_\Omega \left\lbrace \abs{\nabla \xi}^2 - f'(u)\xi^2 \right \rbrace \dd x \geq 0 \quad \text{for all }\xi \in C^\infty_c(\Omega),
\end{equation}
where $C^\infty_c(\Omega)$ is the space of smooth functions with compact support in $\Omega$. In particular, local minimizers are stable solutions. We are interested in determining when stable solutions to \eqref{problem_statement_no_boundary} are regular, i.e., of class $C^2$. By standard elliptic regularity theory, this is clearly equivalent to having $u<1$. We will treat this question both in the interior $\Omega$ and also up to the boundary, in $\overline{\Omega}$.

Assumption \eqref{hypotheses_on_f} is equivalent to 
$$
F(1)=+\infty.
$$ 
Note that under this condition and independently of the Dirichlet boundary condition imposed on \eqref{problem_statement_no_boundary}, the energy $E$ is unbounded below when considered among functions which are smaller than 1 in $\overline{\Omega}$.\footnote{Indeed, let $u\in C^1(\overline{\Omega})$ with $u<1$ in $\overline{\Omega}$ and consider its boundary values $u\vert_{\partial \Omega}$. Let $B_{\rho} \subset\subset \Omega$ be any open ball compactly contained in $\Omega$, and let $\varphi\in C^1_c(\Omega)$, $\varphi\geq 0$, satisfy $\varphi=1$ in $B_\rho$. Then, 
\begin{equation*}
    E\left( u(1-\varphi)+  (1- \varepsilon)\varphi\right)\to - \infty 
\end{equation*}
as $\varepsilon\downarrow 0$. Note that the function where $E$ is computed agrees with $u$ on $\partial\Omega$.} Hence, the functional admits no absolute minimizer. However, we will see next that there are important instances in which stable solutions exist.

The regularity of stable solutions to semilinear elliptic equations is a classical topic that has been widely studied since the publication in the 1970's of the seminal papers of Joseph and Lundgren \cite{JosephLundgren1973} and of Crandall and Rabinowitz \cite{CrandallRabinowitz1975}. The nonlinearities that are considered most often are those defined for all nonnegative real numbers, $f:[0,+\infty)\to \mathbb{R}$---we call these \textit{regular} nonlinearities. As mentioned, in this paper we will consider instead nonlinearities that blow up at a point (say, 1), $f:[0,1)\to\mathbb{R}$ with $f(1)=+\infty$---we call them \textit{singular} nonlinearities---
and, accordingly, solutions with $u\leq 1$. As we will see next, the study of these two classes of nonlinearities, and the available results for each of them,  bear both similarities and differences. 

The following is an important class of problems for which stable solutions exist, both in the case of regular and singular nonlinearities. Consider
\begin{equation}\label{problem_with_lambda}
\left\{
\begin{alignedat}{2}
-\Delta u  &= \lambda f(u) \quad &&\text{in }\Omega\\
u&=0 \quad &&\text{on }\partial \Omega,
\end{alignedat}
\right. 
\end{equation}
where $\lambda>0$ is a real parameter, and the nonlinearity $f$ is either regular or singular. Assume that $f$ is nondecreasing, convex, and satisfies $f(0)>0$. If $f$ is regular, we assume, in addition, that $\lim_{t\to +\infty}f(t)/t=+\infty$. Then, it is well-known (see the monograph \cite{DupaigneBook} for regular nonlinearities, and \cite{MignotPuel1980} for singular nonlinearities) that there exists a value $\lambda^*\in (0,\infty)$, called the extremal parameter, with the following properties: 
\begin{itemize}
    \item If $0<\lambda < \lambda^*$, problem \eqref{problem_with_lambda} admits a unique stable regular solution, $u_{\lambda}$.
    \item If $\lambda = \lambda^*$, the increasing limit $u^*$ of $u_{\lambda}$ as $\lambda \uparrow \lambda^*$ is a stable $L^1$ distributional solution, called {\it the extremal solution}.
    \item If $\lambda > \lambda^*$, \eqref{problem_with_lambda} admits no weak solution.\footnote{For regular nonlinearities, the nonexistence of weak (and not only classical) solutions for $\lambda>\lambda^*$ was proved by Brezis, Cazenave, Martel, and Ramiandrisoa \cite[Corollary 2]{BrezisCazenaveMartelRamiandrisoa1996}. However, their proof can be extended to singular nonlinearities, since the key ingredient in their argument, \cite[Theorem 3]{BrezisCazenaveMartelRamiandrisoa1996}, was generalized to a much wider class of nonlinearities---including singular ones---in \cite[Proposition 4.2]{CFRS}.}
\end{itemize}
The main question is whether $u^*$ is regular (i.e., classical), or not, depending on the nonlinearity, the dimension, and the domain.

The regularity of the extremal solution---and, more generally, the regularity of stable solu\-tions---has been an active field of research in the last 50 years, starting from the aforementioned works of Joseph and Lundgren \cite{JosephLundgren1973} and of Crandall and Rabinowitz \cite{CrandallRabinowitz1975}.  
In 1975~\cite{CrandallRabinowitz1975} introduced what we call a Crandall-Rabinowitz type condition on the nonlinearity---involving the quantity appearing in the liminf \eqref{CR_condition} below. Under a certain condition on this quantity, they proved regularity of stable solutions up to the optimal dimension $n\leq 9$ in the case of regular nonlinearities. The topic underwent a revival in the 1990's with the works of Brezis and collaborators (see~\cite{Brezis2003}). But it is only in 2020 that Cabr\'e, Figalli, Ros-Ot\'on, and Serra~\cite{CFRS} could prove that, for regular nonlinearities, regularity holds up to the optimal dimension without the Crandall-Rabinowitz condition. \cite{CFRS} established an interior H\"older continuity a priori estimate up to the optimal dimension~9 for all classical stable solutions to semilinear equations with nonnegative regular nonlinearities. In the
result, the H\"older exponent $\alpha \in (0,1)$ depends only on the dimension $n$. In fact, under the only assumption that $f\geq 0$,  \cite[Theorem 1.2]{CFRS} establishes the a priori estimate
\begin{equation}\label{CFRS_interior_estimate}
    \norm{u}_{C^\alpha(\overline{B}_{1/2})} \leq C \norm{u}_{L^1(B_1)},
\end{equation}
where $C$ is a constant depending only on the dimension $n$. In addition, the global estimate \cite[Theorem 1.5]{CFRS}
\begin{equation}\label{CFRS_global_estimate}
    \norm{u}_{C^\alpha(\overline{\Omega})} \leq C \norm{u}_{L^1(\Omega)}
\end{equation}
is established for the Dirichlet problem in $C^3$ domains with vanishing boundary condition, whenever $f\geq0$, $f'\geq 0$, and $f''\geq0$. Here, $C$ depends only on the domain $\Omega$.\footnote{This has been improved by Erneta~\cite{E1,E3,E2} to $C^{1,1}$ domains, as well as for operators with variable coefficients under minimal regularity assumptions on the coefficients.} For nonlinearities satisfying these three assumptions, one deduces that $W^{1,2}$ stable weak solutions to semilinear equations, for $n\leq 9$ and such regular nonlinearities, are H\"older continuous---and therefore smooth if $f$ is smooth.

In contrast, when $f$ is singular, much less is known. 
Note that, by the results of \cite{CFRS} for regular nonlinearities,\footnote{Even though \cite{CFRS} concerned regular nonlinearities, the proof of some of its results required considering also singular nonlinearities; see the class $\mathcal{C}$ in Section~4 of \cite{CFRS}, where $f$ is allowed to blow-up at a finite value.} we already know that, for singular nonlinearities, the extremal solution $u^*$ to \eqref{problem_statement_no_boundary} is $C^\alpha$ in dimensions $n\leq 9$.\footnote{This is proved in \cite[Corollary 4.3]{CFRS} for both classes of nonlinearities. Alternatively, one can prove it for the extremal solution corresponding to a singular nonlinearity from the result for regular nonlinearities, as follows.  Since \eqref{CFRS_interior_estimate} and \eqref{CFRS_global_estimate} are \textit{a priori} estimates with universal constants independent of the nonlinearity,  we can apply them
(uniformly in $\lambda$) to the regular solutions $u_\lambda$ for $\lambda<\lambda^*$ in the following way. Denoting $M_\lambda :=~ \max_{\overline{\Omega}} u_\lambda < 1$, we redefine $f(t)$ for $t>M_\lambda$ in order to obtain a new \textit{regular} nonlinearity, $\tilde{f}_\lambda$, such that $u_{\lambda}$ satisfies $-\Delta u_\lambda = \tilde{f}_\lambda(u_\lambda)$ in $\Omega$.}
However, since now $f(1)=+\infty$, this does not ensure that $u$ is smooth, since we could have $u=1$ at some point (and hence $-\Delta u = f(u)=+\infty$ there) even if $u$ is H\"older continuous. Therefore, one needs to show that $u<1$ in~$\Omega$.

The problem was already studied in the special case that $f$ is a negative power, $f(u)=(1-u)^{-p}$ for some $p>0$, by Joseph and Lundgren \cite{JosephLundgren1973} and, in the eighties, by Mignot and Puel \cite{MignotPuel1980}. In the first two decades of the 21$^{\text{st}}$ century, some contributions to the topic, described in detail below, have been made by Castorina, Esposito, and Sciunzi~\cite{CastorinaEspositoSciunzi2007} and Luo, Ye, and Zhou~\cite{LuoYeZhou2011}. 

In the current paper we will improve these results. We treat singular nonlinearities under the Crandall-Rabinowitz type assumption \eqref{CR_condition} on the nonlinearity $f$. Based on the nonintegrability condition \eqref{hypotheses_on_f}, i.e., $F(1)=+\infty$, we will establish $u<1$ in $\Omega$ up to the optimal dimension $n\leq 6$ by finding an $L^\infty$ estimate for $F(u)$. This approach seems to be new in the literature. Moreover, in the spirit of the results of~\cite{CFRS}, we prove for the first time interior regularity estimates (for stable solutions and singular nonlinearities)  that are independent of the boundary condition that $u$ satisfies.\footnote{In fact, to our knowledge, all known estimates for singular nonlinearities concerned the Dirichlet vanishing boundary condition.} Indeed, we bound $\norm{F(u)}_{L^\infty}$ in an interior ball in terms of the $L^1$ norm of $u$ in a larger one, in analogy to \eqref{CFRS_interior_estimate}, up to the optimal dimension $6$.

Still, the following main question remains to be answered:

\begin{itemize}
\item[]
{\bf Open problem.} Does regularity of stable solutions hold up to the optimal dim- \newline
ension for singular nonlinearities under no Crandall-Rabinowitz type condition?
\end{itemize}

After finishing this work, we have learned that a forthcoming article by Fa Peng and Salvador Villegas~\cite{Peng} settles this problem in the radial case, in the affirmative and up to the optimal dimension $n\leq 6$, for singular nonintegrable nonlinearities.

On the other hand, Figalli and Franceschini~\cite{FF} (forthcoming) prove partial regularity results for stable solutions in arbitrary dimensions, for both classes of nonlinearities, under a weaker Crandall-Rabinowitz type assumption on $f$ than ours. In their condition, $f''(s)$ is not taken pointwise but integrated on an interval of values of~$s$. Their results give all the sharp bounds for regular power and exponential nonlinearities  (improving in these cases the Morrey regularity results for $n \geq 10$ of~\cite[Theorem 1.9]{CFRS}) and also for singular nonlinearities.

\subsection{Explicit stable singular solutions} 
To understand why regularity estimates for nonintegrable singular nonlinearities may hold only up to dimension~6, let us exhibit some explicit stable singular solutions.

For each $p>0$ and $n\geq 2$, the function $u_p(x)=1-\abs{x}^{{2}/({1+p})}$ is a $W^{1,2}$ singular  solution (note that $u_p(0)=1$) of $-\Delta u = f(u)$ in the unit ball $B_1\subset \mathbb{R}^n$, satisfying $u=0$ on $\partial B_1$, for 
\begin{equation*}
f(u)=\frac{2}{1+p}\left(\frac{2}{1+p}+n-2\right)(1-u)^{-p}.    
\end{equation*}
On the other hand, since
\begin{equation}
    f'(u_p) = \frac{2p}{1+p}\left(\frac{2}{1+p}+n-2\right)\abs{x}^{-2}
\end{equation}
and the optimal constant in Hardy's inequality is $(n-2)^2/2$, it is easy to check that $u_p$ is stable if, and only if,
\begin{equation}\label{np_definition}
    n\geq N(p) := 2+\frac{4p}{1+p} + 4\sqrt{\frac{p}{1+p}}.
\end{equation}
From this we deduce that
\begin{itemize}
    \item If $n = 2$, $u_p$ is unstable for every $p>0$, and, if $n\geq 3$, $u_p$ is stable for $p\in (0,1)$ sufficiently small.
    \item If $n\leq 6$, $u_p$ is unstable for every $p\geq 1$, and, if $n\geq 7$,  $u_p$ is stable for some $p>1$.
\end{itemize}
Notice that $F(1)=+\infty$ when $p\geq 1$, and $F(1)<+\infty$ when $0<p<1$. This suggests that the optimal dimension for the regularity of stable solutions to semilinear equations with {\em nonintegrable} singular nonlinearities might be $n=6$---perhaps even in the nonradial case. In contrast, when $f$ is singular but integrable, there exist singular stable solutions in dimensions as low as $n=3$.

Mignot and Puel \cite[Th\'eor\`eme 5]{MignotPuel1980} showed that, in the case of singular power nonlinearities, the above computation gives the optimal dimension for regularity. In fact, for $f(u)=(1-u)^{-p}$ with $p>0$, and every smooth bounded domain $\Omega$, they showed that $\norm{u}_{L^\infty(\Omega)}<1$ whenever $n<N(p)$, with $N(p)$ defined by \eqref{np_definition}. Power nonlinearities and, more particularly, the case $f(u)=(1-u)^{-2}$ were treated extensively in the book by Esposito, Ghoussoub, and Guo \cite{EspositoGhoussoubGuo_Book}, in which they addressed many further properties and considerations on the problem. 

\subsection{Main results} 

Our main results concern nonnegative stable solutions to \eqref{problem_statement_no_boundary} for nonlinearities $f$ that are nonnegative, nondecreasing, satisfy $F(1)=+\infty$, and {\em the ``liminf'' condition}\footnote{Throughout this paper we denote with an underline any quantity defined by a liminf, such as $\underline{\gamma}$, and with an overline those that are defined by a limsup. Whenever the limit exists, we omit the under and overlines.}
\begin{equation}\label{CR_condition}
    \underline{\gamma}:=\liminf_{t\to 1^-} \frac{f(t)f''(t)}{f'(t)^2}>1.
\end{equation}
The quantity $ff''/(f')^2$ was introduced for the first time by Crandall and Rabinowitz  \cite{CrandallRabinowitz1975} in the problem concerning regular nonlinearities. Under appropriate assumptions on the values attained by this quotient, they were able to prove, in the case of regular $f$, boundedness of the extremal solution up to the optimal dimension $9$.  In the context of singular nonlinearities, condition \eqref{CR_condition} for $ff''/(f')^2$ first appeared in the 2009 article of Castorina, Esposito, and Sciunzi~\cite{CastorinaEspositoSciunzi2007}. Under this assumption, they established a global estimate in all $\overline{\Omega}$ which we will state below and improve in the current paper. 

We will refer to the ``liminf'' $\underline{\gamma}$ in~\eqref{CR_condition} as the {\em Crandall-Rabinowitz constant}. For regular nonlinearities one always has $\underline{\gamma} \leq 1$, since otherwise $f$ must blow-up at some finite real number---see, for instance, \cite[pp. 213-214]{CrandallRabinowitz1975}. Instead, for singular nonlinearities with $f(1)=+\infty$ and $F(1)=+\infty$, if the limit $\gamma$ exists, then it must belong to $[1,2]$; see Proposition \ref{proposition_appendix} in the Appendix. Notice that power nonlinearities $f(u)=(1-u)^{-p}$ with $p>0$ satisfy \eqref{CR_condition} with $\gamma=1+1/p$. Instead, \eqref{CR_condition} is {\em not} satisfied by exponential type nonlinearities, namely, $f(u)=\exp(1/(1-u))$, for which we have $\gamma =1$.

Note that the Crandall-Rabinowitz condition \eqref{CR_condition} is equivalent to $f^{- \theta}$ being concave near~$1$ for some $\theta>0$. In fact, if $\underline{\gamma}>1$, then for every $0<\theta<\underline{\gamma}-1$ the function $f^{- \theta}$ is concave near~$1$, and, conversely, if $f^{-\theta}$ is concave near~$1$ for some $\theta>0$, then $\underline{\gamma}\geq 1+\theta>1$. This follows from the equality
\begin{equation}\label{fminusgamma}
    (f^{-\theta})''(t)=-\theta f^{-\theta-2}(t)\left(f''(t)f(t)-(1+\theta) f'(t)^2\right).
\end{equation}
As an immediate consequence, when \eqref{CR_condition} holds, then there exist constants $\theta>0$ and $K\geq 0$ such that
\begin{equation}\label{equivalent_CR}
    f''(t)f(t)\geq \left(1+\theta\right) f'(t)^2-K \quad \text{for all }0\leq t< 1.
\end{equation}

In this article we establish interior and global regularity estimates up to the optimal dimension, $n=6$, for nonnegative stable solutions to semilinear equations with singular nonlinearities satisfying $F(1)=+\infty$ and the Crandall-Rabinowitz condition \eqref{CR_condition}. The following are our main results. We begin with the interior estimate. To the best of our knowledge, this is the first regularity estimate for stable solutions of equations with singular nonlinearities that  is independent of the boundary condition that $u$ may satisfy. Since the claim is local, for convenience we state it in the unit ball. 

\begin{theorem}\label{theorem_interior}
Let $u\in C^2(\overline{B_1})$ be a nonnegative stable solution of $-\Delta u = f(u)$ in $B_{1}\subset \mathbb{R}^n$ for some function $f\in C^2([0,1))$ satisfying $f\geq 0$, $f'\geq 0$, $F(1)=+\infty$ (where $F'=f$), and that $f^{-\theta}$ is concave near 1 for some $\theta>0$. In particular, we have that \eqref{CR_condition} and \eqref{equivalent_CR} hold for some $K\geq 0$ (and $\underline{\gamma}\geq 1+\theta$).

Then, for $n\leq 6$, we have
\begin{equation}\label{estimate_theorem_interior}
    \norm{u}_{L^{\infty}(B_{1/2})}\leq F^{-1}\left(C \norm{u}^2_{L^1(B_1)}\right)<1
\end{equation}
for some constant $C$ depending only on $n$, $\theta$, and $K$.
\end{theorem}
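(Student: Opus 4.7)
The plan is to decouple the estimate into two ingredients: a novel local $L^\infty$ bound on $F(u)$ in terms of $\int u^2$ on a slightly larger ball, and the already-known CFRS~\cite{CFRS} interior $L^\infty$--$L^1$ estimate to reduce $\int u^2$ to $\|u\|_{L^1(B_1)}^2$.

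For the reduction via CFRS, since the only assumptions used in \cite[Theorem~1.2]{CFRS} are $f\geq 0$ and classical stability (as remarked in the introduction, their proof extends to our singular setting via approximation by the subextremal classical solutions $u_\lambda$ for $\lambda<\lambda^*$), we have for $n\leq 9$ the interior bound $\|u\|_{L^\infty(B_{3/4})} \leq C\|u\|_{L^1(B_1)}$, and hence
\begin{equation*}
\int_{B_{3/4}} u^2 \,dx \;\leq\; \|u\|_{L^\infty(B_{3/4})}\,\|u\|_{L^1(B_{3/4})} \;\leq\; C\,\|u\|_{L^1(B_1)}^2.
\end{equation*}

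The main new estimate to prove is therefore
\begin{equation*}
\|F(u)\|_{L^\infty(B_{1/2})} \;\leq\; C\int_{B_{3/4}} u^2\,dx,
\end{equation*}
valid for $n \leq 6$ under the Crandall-Rabinowitz assumption. The strategy is to combine the stability inequality~\eqref{stability_definition}, tested against $\xi = \eta\, g(u)$ for an appropriate $g$ (a cleverly chosen power of $f(u)$), with the PDE $-\Delta u = f(u)$ multiplied by a suitable function $\eta^2 H(u)$. The CR condition in the form~\eqref{equivalent_CR}, namely $f''f \geq (1+\theta)(f')^2 - K$, is then used to absorb the quadratic gradient term $\int \eta^2 (g')^2 |\nabla u|^2$ produced by the stability expansion (via integration by parts in $u$), yielding a closed estimate of the shape
\begin{equation*}
\int_{B_{3/4}} \eta^2\, f(u)^{2\sigma+1}\,dx \;\leq\; C\int_{B_{3/4}} |\nabla\eta|^2\, f(u)^{2\sigma}\,dx \;+\; \text{lower-order terms},
\end{equation*}
for $\sigma$ in a range dictated by $\theta$. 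A Moser-type iteration on powers of $f(u)$, combined with Sobolev embedding, then upgrades this integral inequality to an $L^\infty$ bound on $f(u)^{2\sigma}$, and hence on $F(u)$ (using the nonintegrability $F(1)=+\infty$, which ensures $F$ is comparable to a power of $f$ close to $1$).

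The main obstacle is executing this second step with enough control to bound $F(u)$ itself in $L^\infty$ rather than merely some power of $f(u)$ in $L^p$: one must track carefully the dependence on $\theta$ and $K$ through each integration by parts, and ensure the iteration scheme converges. The dimension restriction $n\leq 6$ emerges precisely where the Moser iteration ceases to close, reflecting the radial counterexamples highlighted in the introduction (the critical case $p=1$ giving $N(1)=4+2\sqrt{2}\approx 6.83$).
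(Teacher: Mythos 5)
Your reduction via CFRS --- using $\|u\|_{L^\infty(B_{3/4})}\leq C\|u\|_{L^1(B_1)}$ to replace $\int u^2$ by $\|u\|_{L^1}^2$ --- is a legitimate (if slightly heavy) detour; the paper gets the quadratic right-hand side more directly, without invoking the full CFRS interior estimate. The genuine gap is in the main step: the proposed \emph{Moser-type iteration on powers of $f(u)$} cannot close. The absorption afforded by the Crandall--Rabinowitz condition is a one-shot affair, not an iterable one. Concretely, after choosing $c=f(u)$ in the reorganized stability inequality one arrives (Lemma~\ref{basic_lemma_f(u)}) at
\begin{equation*}
(1+\theta)\int |\nabla f(u)|^2\eta^2\,dx \;\leq\; \int f(u)^2|\nabla\eta|^2\,dx + K\int|\nabla u|^2\eta^2\,dx.
\end{equation*}
Setting $\eta=f(u)^{1+\beta}\zeta$ produces, on the right-hand side, a term $(1+\beta)^2(1+\delta)\int f(u)^{2+2\beta}|\nabla f(u)|^2\zeta^2$ that competes directly with the left-hand side $(1+\theta)\int f(u)^{2+2\beta}|\nabla f(u)|^2\zeta^2$. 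Absorption therefore requires $(1+\beta)^2<1+\theta$, i.e.\ $\beta<\sqrt{1+\theta}-1$, and this upper bound on $\beta$ is fixed by $\theta$ and does not improve from step to step. There is no room to run an iteration up the ladder of exponents of $f(u)$: one extracts the single estimate $\|f(u)\|_{L^{\overline{p}}(B_{1/2})}\leq C\|u\|_{L^1(B_1)}$ with $\overline{p}=(2+\beta)\tfrac{2n}{n-2}$ for some $\beta\in(0,\sqrt{1+\theta}-1)$ (Proposition~\ref{proposition_first_proof}), and that is as far as stability will take you. The condition $\overline{p}>n$ then forces $n<6+2\beta$, hence $n\leq 6$; this is where the dimension restriction really enters, not in a convergence criterion for an iteration.

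What replaces the iteration, and you are missing, is the observation \eqref{Fu_is_subsolution} that $F(u)$ is a \emph{nonnegative subsolution} of $-\Delta F(u)\leq f(u)^2$ (using $f'\geq 0$). With $f(u)\in L^{\overline{p}}$, $\overline{p}>n$, the right-hand side $f(u)^2$ lies in $L^{\overline{p}/2}$ with $\overline{p}/2>n/2$, and a single application of the local $L^\infty$ bound for nonnegative subsolutions (GT Theorem~8.17) gives $\|F(u)\|_{L^\infty(B_{1/8})}\leq C\bigl(\|F(u)\|_{L^2(B_{1/4})}+\|f(u)\|_{L^{\overline{p}}(B_{1/4})}^2\bigr)$; the $L^2$ norm of $F(u)$ is in turn controlled using $0\leq F(u)\leq f(u)\,u$ (from $f'\geq 0$), elliptic regularity, and again Proposition~\ref{proposition_first_proof}. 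This circumvents entirely the need to relate $F$ to a power of $f$ near $1$ --- a relation that, as you half-acknowledge, fails in borderline cases (e.g.\ $f(t)=(1-t)^{-1}$, $F(t)=\log\frac{1}{1-t}$), so resting the proof on it would be unsafe. In short: test stability \emph{once} with a power of $f(u)$ dictated by $\theta$, then use the subsolution structure of $F(u)$ \emph{once}; no iteration is possible or needed.
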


\begin{remark}\label{rem:tau0intro}
Estimate \eqref{estimate_theorem_interior} still holds under less stringent conditions on the value of $\underline{\gamma}$ (i.e., if we allow $0<\underline{\gamma}\leq 1$), although up to a smaller dimension\footnote{Notice---although this has no bearing on the proofs---that letting $0<\underline{\gamma}<1$, \eqref{fminusgamma} leads to $f^{-\theta}$ being \textit{convex} near 1 for every $\theta < \underline{\gamma}-1<0$.}. Direct inspection of the proofs in sections~\ref{interior_through_powers} and \ref{interior_powers_of_mod_x} shows that, assuming only $\underline{\gamma}>0$, then \eqref{estimate_theorem_interior} holds in dimensions 
$$
n<4+2\sqrt{\underline{\gamma}}
$$ 
---see Remark \ref{rem:tau0sec2}. As an interesting example, notice that any singular $f$ with $f\geq 0$ and $f'''\geq 0$ satisfies $\underline{\gamma}\geq \frac{1}{2}$, since
\begin{equation*}
    f''(t)f(t)-f''(0)f(0)= \frac{1}{2}f'(t)^2-\frac{1}{2}f'(0)^2+\int_0^t f'''(s)f(s)\dd s.
\end{equation*}
Note also that any log-convex function $f$, with $f\geq 0$ and $f'\geq 0$, satisfies $\underline{\gamma}\geq 1$. 
\end{remark}
Next, we state our global estimate for the Dirichlet problem with vanishing boundary condition. Here we need to assume also that $f$ is convex. A similar situation was encountered by Cabr\'e, Figalli, Ros-Oton, and Serra~\cite{CFRS}, where they studied the problem for regular nonlinearities: although only the assumption that $f\geq 0$ was needed to prove the interior estimates, $f'\geq 0$ and $f''\geq 0$ were also needed to obtain global regularity.\footnote{The same assumptions were needed in the paper \cite{C-quant} by the first author (see also the survey \cite{C-survey}), which provides quantitative proofs of both the interior and global estimates of \cite{CFRS}. Two ingredients in the proofs in \cite{C-quant} are very different from those of \cite{CFRS}, which used compactness arguments.} 

Note that in dimensions $n \leq 2$, for the following global estimate we only require $f$ to satisfy $f\geq 0$, $f'\geq 0$, and $f''\geq 0$. In particular, we do not make any assumptions on the value of the Crandall-Rabinowitz constant $\underline{\gamma}$ in \eqref{CR_condition}. This improves two results of Luo, Ye, and Zhou from 2011, who proved \cite[Theorem 1.1]{LuoYeZhou2011} global regularity of the extremal solution $u^*$ in~$\overline{\Omega}$ when $n=2$ in the radial case for every convex and nondecreasing singular $f$---although not necessarily nonintegrable---and in the nonradial case \cite[Theorem 1.3]{LuoYeZhou2011}, also for $n=2$, whenever $f$ satisfies, in addition, $\underline{\gamma}>0$ and a rather restrictive growth assumption of the type \eqref{conditions_castorina_b} below.

\begin{theorem}\label{theorem_global}
Let $\Omega \subset \mathbb{R}^n$ be a bounded smooth domain and let $u\in C^0(\overline{\Omega})\cap C^2\left({\Omega}\right)$ be a stable solution of
\begin{equation}\label{problem_statement}
\left\{
\begin{alignedat}{2}
-\Delta u  &= f(u) \quad &&\text{in }\Omega,\\
u&=0 \quad &&\text{on }\partial \Omega,\\ 
\end{alignedat}
\right. 
\end{equation}
for some function $f\in C^2([0,1))$ satisfying $f\geq 0$, $f'\geq 0$, $f''\geq 0$, and $F(1)=+\infty$ (where $F'=f$). Assume, moreover, that one of the following holds:
\begin{enumerate}[(i)]
    \item $n\leq 2$,
    \item $3\leq n\leq 6$, and $f$ satisfies the Crandall-Rabinowitz condition \eqref{CR_condition}, or equivalently, $f^{-\theta}$ is concave near 1 for some $\theta>0$.
\end{enumerate}

Then,
\begin{equation}\label{estimate_theorem_global}
\norm{u}_{L^{\infty}(\Omega)} \leq C <1
\end{equation}
for some constant $C<1$ depending only on $\Omega$ and $f$.
\end{theorem}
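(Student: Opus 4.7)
The strategy for case (ii) is to combine Theorem~\ref{theorem_interior} with a global boundary Lipschitz estimate. Throughout, $u\in C^{0}(\overline\Omega)\cap C^{2}(\Omega)$ is a stable solution with $u=0$ on $\partial\Omega$ taking values in $[0,1)$, so trivially $\|u\|_{L^{1}(\Omega)}\le|\Omega|$. For each $x_{0}\in\Omega$ with $B_{r}(x_{0})\subset\Omega$ and $r\le 1$, the rescaling $v(y):=u(x_{0}+ry)$ on $B_{1}$ solves $-\Delta v=\tilde f(v)$ with $\tilde f:=r^{2}f$, which inherits the hypotheses of Theorem~\ref{theorem_interior} (the concavity of $\tilde f^{-\theta}$ is the same as for $f^{-\theta}$, and the constant $K$ in \eqref{equivalent_CR} scales to $r^{4}K\le K$). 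Applying Theorem~\ref{theorem_interior} and undoing the rescaling gives
\[
\|u\|_{L^{\infty}(B_{r/2}(x_{0}))}\le F^{-1}\!\bigl(C\,r^{-2-2n}\,\|u\|_{L^{1}(B_{r}(x_{0}))}^{\,2}\bigr),\qquad C=C(n,\theta,K).
\]
The crude bound $\|u\|_{L^{1}(B_{r})}\le|B_{r}|$ already yields $u\le F^{-1}(C'/r^{2})<1$ on $\Omega_{r_{0}}:=\{\mathrm{dist}(\cdot,\partial\Omega)\ge r_{0}\}$ for any fixed $r_{0}>0$.

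\textbf{Upgrading to a global bound.} To extend this estimate to all of $\Omega$, I would establish a uniform boundary Lipschitz estimate $u(x)\le L\,\mathrm{dist}(x,\partial\Omega)$, with $L=L(\Omega,f)$. Inserting this improved $L^{1}$ bound, $\|u\|_{L^{1}(B_{r}(x_{0}))}^{\,2}\lesssim L^{2}\,r^{2n+2}$, into the rescaled interior estimate with $r=\mathrm{dist}(x_{0},\partial\Omega)/2$ exactly cancels the unfavorable factor $r^{-2-2n}$ and produces $u(x_{0})\le F^{-1}(C'')<1$ uniformly in $x_{0}$. The boundary Lipschitz bound itself follows from global $W^{2,q}$-regularity for the Dirichlet problem once $\|f(u)\|_{L^{q}(\Omega)}\le C$ is secured for some $q>n$; this $L^{q}$-integrability is produced for stable solutions by a Nedev-type argument: test the stability inequality~\eqref{stability_definition} with $\xi=(f(u)-f(0))^{\alpha}$ for a suitable $\alpha>0$, use the convexity inequality $u\,f'(u)\ge f(u)-f(0)$, and iterate, with the gain at each step powered by the Crandall--Rabinowitz structure~\eqref{CR_condition}. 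The iteration converges to $q>n$ precisely in dimensions $n\le 6$.

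\textbf{Case (i), $n\le 2$.} Here the Crandall--Rabinowitz condition is not assumed, so Theorem~\ref{theorem_interior} does not apply. Instead, the global bound comes directly from a Nedev-type argument. Testing stability with $\xi=f(u)-f(0)$ (which vanishes on $\partial\Omega$ since $u$ does) and combining with the equation and the convexity inequalities $f'\ge 0$, $f''\ge 0$ yields an $L^{2}(\Omega)$-bound on $f(u)$. In dimensions $n\le 2$ the strong Sobolev/Trudinger--Moser embedding $W^{1,2}\hookrightarrow L^{p}$ for every $p<\infty$ then upgrades this to $L^{\infty}$-control of $F(u)$, and consequently $u\le F^{-1}(C)<1$ globally. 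The absence of~\eqref{CR_condition} is compensated by the strength of the low-dimensional Sobolev embedding.

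\textbf{Main obstacle.} The crucial difficulty is producing uniform $L^{q}$-integrability of $f(u)$ for some $q>n$ from stability and convexity alone, without degeneration as $u\uparrow 1$. The Crandall--Rabinowitz hypothesis does essential work in the bootstrap for $3\le n\le 6$, while the stronger low-dimensional Sobolev embedding takes its place when $n\le 2$. Matching up the two ends of the argument---the interior estimate $u\le F^{-1}(C'/r^{2})<1$ on $\Omega_{r_{0}}$ and the boundary Lipschitz bound on $\Omega\setminus\Omega_{r_{0}}$---is where the $F(1)=+\infty$ condition is used in an essential way, since it is what keeps $F^{-1}$ strictly below $1$ at every finite argument.
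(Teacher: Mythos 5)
Your plan for case (ii) has a structural inefficiency: you propose to establish $\|f(u)\|_{L^q(\Omega)}\le C$ for some $q>n$ in order to get global $W^{2,q}$ regularity, hence a boundary Lipschitz estimate, hence a good $L^1$ bound near $\partial\Omega$, hence (via the rescaled interior estimate) an $L^\infty$ bound. But once you have $\|f(u)\|_{L^q(\Omega)}\le C$ with $q>n$, you are already done: $F(u)\ge 0$, $F(u)=0$ on $\partial\Omega$, $-\Delta F(u)\le f(u)^2\in L^{q/2}$ with $q/2>n/2$, so the global $L^\infty$ bound on $F(u)$ follows directly from standard subsolution estimates, and then $F(1)=+\infty$ gives $u\le F^{-1}(C)<1$. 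The detour through the interior estimate and the boundary Lipschitz bound is logically redundant. More importantly, the heart of the matter--the global $L^q$ bound on $f(u)$ with $q>n$--is left as a vague sketch (``test stability with $\xi=(f(u)-f(0))^\alpha$ ... and iterate''). This is exactly what the paper establishes as Proposition~\ref{proposition_global_Lp}, and it takes a full page of careful estimates (not an iteration: a single well-chosen test function $c=f(u)$, $\eta=\tilde f(u)^{1+\beta}$, plus interpolation and the Nedev/Ye--Zhou bound $\int_\Omega f'(u)f(u)\,dx\le C$). Asserting ``the iteration converges to $q>n$ precisely in dimensions $n\le6$'' without carrying it out leaves the core technical content unproved.

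For case (i), the claim that testing stability with $\xi=f(u)-f(0)$ plus convexity ``yields an $L^2(\Omega)$-bound on $f(u)$'' skips the essential mechanism. What that test function gives is the Nedev/Ye--Zhou bound $\int_\Omega f'(u)f(u)\,dx\le C$. To turn this into $L^p$ control of $f(u)$ itself, the paper uses that $f(u)$ is a nonnegative subsolution, $-\Delta f(u)\le f'(u)f(u)$, with $L^1$ right-hand side, and invokes elliptic regularity for the Dirichlet problem with measure data: in $n\le2$, the solution $w$ of $-\Delta w=f'(u)f(u)$, $w|_{\partial\Omega}=0$ satisfies $\|w\|_{L^p}\le C\|f'(u)f(u)\|_{L^1}$ for every $p<\infty$, and comparison gives $\|f(u)\|_{L^p}\le C$. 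Your proposal omits this comparison step entirely, and it is not a formality---without it, $\int_\Omega f(u)^2$ is not controlled (one gets $\int f(u)^2\le C+f(0)\int f(u)$ from convexity, but $\int f(u)$ is not a priori bounded by the stability estimate alone). The invocation of $W^{1,2}\hookrightarrow L^p$ is also misplaced: there is no available $W^{1,2}$ bound on $f(u)$ or $F(u)$ in this setting without a Crandall--Rabinowitz condition, and even granting $f(u)\in L^2$, the resulting $f(u)^2\in L^1$ is borderline insufficient to conclude $F(u)\in L^\infty$ in $n=2$; one needs $f(u)\in L^p$ for some $p>2$, which is precisely what the comparison argument provides.
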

\begin{remark}\label{rem:tau0intro2}
As for the interior estimate, assuming only $\underline{\gamma}>0$, one can show that estimate \eqref{estimate_theorem_global} holds in dimensions $n<4+2\sqrt{\underline{\gamma}}$---see Remark \ref{rem:tau0global}.
\end{remark}

Theorem \ref{theorem_global} improves two previously known results. The first one, from 2009, is due to Castorina, Esposito, and Sciunzi \cite[Theorem 1.3]{CastorinaEspositoSciunzi2007}, who studied the problem in the larger context of the $p$-Laplacian. In particular, for the classical Laplacian, they showed that if $f$ is of class~$C^2$, nonnegative, nondecreasing, and satisfies 
\begin{equation}
    \underline{\gamma}:=\liminf_{t\to 1^-}\frac{f(t)f''(t)}{f'(t)^2}>1,\label{conditions_castorina_a}
\end{equation}
and
\begin{equation}
    \underline{m}:=\liminf_{t\to 1^-} \frac{\log f(t)}{\log \frac{1}{1-t}}\geq 1, \label{conditions_castorina_b}
\end{equation}
then $\norm{u}_{L^\infty(\Omega)}<1$ in dimensions $n\leq 5$. In addition, if $\gamma$ and $m$ exist as limits, we will show in Subsection \ref{ap::subsec_rel_ctts} of the Appendix that their proof yields $\norm{u}_{L^\infty(\Omega)}<1$ up to the optimal dimension, $n=6$. In contrast, in our result, Theorem~\ref{theorem_global}, we do not require assumption \eqref{conditions_castorina_b}, and we reach the optimal dimension, $n=6$. Assumptions \eqref{conditions_castorina_a} and \eqref{conditions_castorina_b} are independent, in the sense that \eqref{conditions_castorina_a} does not follow from \eqref{conditions_castorina_b}, and viceversa---see the discussion on \eqref{castorina_counterexample} in the Appendix for more details.

Later, in 2011, Luo, Ye, and Zhou~\cite[Theorem 1.6]{LuoYeZhou2011} showed that if $f$ is of class $C^2$, positive, nondecreasing, convex, and if the following limit exists and satisfies
\begin{equation}\label{condition_luoyezhou}
    q:=\lim_{t\to 1^-}\frac{f'(t)(1-t)}{f(t)} >0,
\end{equation}
then $\norm{u}_{L^\infty(\Omega)}<1$ if $n<N(q)$, with $N(q)$ defined by \eqref{np_definition}. In particular, if $f$ is nonintegrable and the limit $q$ exists, it is easy to see that then necessarily $q\geq 1$, and therefore $N(q)>6$.

The existence of the limit \eqref{condition_luoyezhou} is a strong condition. In particular, by integrating \eqref{condition_luoyezhou}, it is easy to see that, if $q>0$, then $f$ is bounded near 1 from below and from above by power functions with respective exponents $-q+ \varepsilon$ and $-q-\varepsilon$ for $\varepsilon>0$ arbitrarily small. However, as we show in Subsection \ref{ap::subsec_g_vs_q} in the Appendix, the class of nonlinearities satisfying $\underline{\gamma}>1$ and the class for which the limit $q$ exists are not contained in one another.

\subsection{Structure of the proofs} 
We will give two different proofs of the interior estimate, Theorem \ref{theorem_interior}, and one proof of our global estimate, Theorem \ref{theorem_global}. All proofs rely on the observation that $F(u)$ is a nonnegative subsolution of an elliptic equation,
\begin{equation}\label{Fu_is_subsolution}
    -\Delta F(u) = -f'(u)\abs{\nabla u}^2 + f(u)^2 \leq f(u)^2
\end{equation}
when $f'\geq 0$, which justifies the square $\norm{u}_{L^1(B_1)}^2$ in the right-hand side of the interior estimate \eqref{estimate_theorem_interior}. Since $F(u)\geq 0$, by standard regularity theory, $F(u)$ will be bounded (which is what we need to prove) whenever $f(u)\in L^q(\Omega)$ for some $q>n$ ---since then $f(u)^2 \in L^{q/2}(\Omega)$ in the right-hand side of \eqref{Fu_is_subsolution} for some $q/2> n/2$.

Our first proof of Theorem \ref{theorem_interior} is a straightforward consequence of the following $L^p$ estimate on $f(u)$. It holds in all dimensions and does not require $f'\geq 0$. We comment on its proof after Lemma~\ref{basic_lemma_f(u)}.

\begin{proposition}\label{proposition_first_proof}
Let $u\in C^2({\overline{B_1}})$ be a nonnegative stable solution of $-\Delta u = f(u)$ in $B_{1}\subset \mathbb{R}^n$ for some function $f\in C^2([0,1))$ satisfying $f\geq 0$. Assume that $f^{-\theta}$ is concave near 1 for some $\theta>0$. In particular, we have that \eqref{CR_condition} and \eqref{equivalent_CR} hold for some $K\geq 0$ (and $\underline{\gamma}\geq 1+\theta$). 

For every $\beta \in (0, \sqrt{1+\theta}-1)$, if $\overline{p} = (2+ \beta)2n/(n-2)$ when $n\geq 3$, and $\overline{p}\geq 1$ is arbitrary when $n\leq2$, then
\begin{equation*}
    \norm{f(u)}_{L^{\overline{p}}(B_{1/2})}\leq C \norm{u}_{L^1(B_1)},
\end{equation*}
where $C$ is a constant depending only on $n$, $\theta$, $K$,  $\beta$, and $\overline{p}$.
\end{proposition}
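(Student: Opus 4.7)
The plan is to derive a Caccioppoli-type inequality for powers of $f(u)$ by combining the stability inequality with the equation and the Crandall--Rabinowitz condition \eqref{equivalent_CR}, and then to bootstrap from an elementary $L^1$ bound to the desired $L^{\overline{p}}$ bound via Sobolev.

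I would begin with the elementary estimate $\norm{f(u)}_{L^1(B_r)}\leq C_r \norm{u}_{L^1(B_1)}$ for each $r\in(1/2,1)$, obtained by testing $-\Delta u = f(u)$ against a nonnegative cutoff $\phi\in C^\infty_c(B_1)$ equal to $1$ on $B_r$ and using that $f\geq 0$ together with $\int f(u)\phi = -\int u\,\Delta\phi$. The heart of the argument is the Caccioppoli-type estimate (likely the content of Lemma~\ref{basic_lemma_f(u)}): for each $\alpha$ with $\alpha<1+\sqrt{1+\theta}$ (equivalently $\alpha^2<2\alpha+\theta$) and each $\eta\in C^\infty_c(B_1)$, I would first plug $\xi=f(u)^\alpha\eta$ into \eqref{stability_definition} to obtain, after Young's inequality on the cross term,
\begin{equation*}
\int f'(u) f(u)^{2\alpha}\eta^2 \leq (1+\varepsilon)\alpha^2 \int f'(u)^2 f(u)^{2\alpha-2}\abs{\nabla u}^2 \eta^2 + C_\varepsilon \int f(u)^{2\alpha}\abs{\nabla \eta}^2.
\end{equation*}
Then I would multiply the equation by $\psi(u)\eta^2$ with $\psi(u):=(2\alpha+\theta)^{-1} f'(u) f(u)^{2\alpha-1}$ and integrate by parts; by \eqref{equivalent_CR}, $\psi'(u) \geq f'(u)^2 f(u)^{2\alpha-2} - K(2\alpha+\theta)^{-1} f(u)^{2\alpha-2}$, so
\begin{equation*}
\int f'(u)^2 f(u)^{2\alpha-2}\abs{\nabla u}^2 \eta^2 \leq \frac{1}{2\alpha+\theta} \int f(u)^{2\alpha} f'(u)\eta^2 + (\text{cutoff and } K\text{-error terms}).
\end{equation*}
Combining the two displays and absorbing by means of the strict inequality $(1+\varepsilon)\alpha^2 < 2\alpha+\theta$ (valid for $\varepsilon$ small) yields the Caccioppoli-type bound
\begin{equation*}
\int \abs{\nabla(f(u)^\alpha\eta)}^2 \leq C \int f(u)^{2\alpha}\abs{\nabla \eta}^2 + (\text{lower order}).
\end{equation*}

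Sobolev applied to $f(u)^\alpha\eta$ then gives, for $n\geq 3$,
\begin{equation*}
\norm{f(u)}_{L^{\alpha\cdot 2^*}(B_{r'})}^{2\alpha} \leq \frac{C}{(r-r')^2} \norm{f(u)}_{L^{2\alpha}(B_r)}^{2\alpha} + (\text{lower order}), \qquad 2^*=\tfrac{2n}{n-2},
\end{equation*}
so each admissible $\alpha$ upgrades the local integrability of $f(u)$ by the factor $n/(n-2)$. I would chain this estimate a finite number of times, on a decreasing sequence of radii $1=r_0>r_1>\cdots>r_K=1/2$, choosing at each step $\alpha_k=q_k/2$ so that the current exponent $q_k$ is multiplied by $n/(n-2)$. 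The last stage is tuned so that $\alpha_K=2+\beta$ --- admissible precisely because $\beta<\sqrt{1+\theta}-1$ --- producing the target output $\alpha_K\cdot 2^* = \overline{p}$. Chaining these estimates with the starting $L^1$ bound delivers $\norm{f(u)}_{L^{\overline p}(B_{1/2})}\leq C\norm{u}_{L^1(B_1)}$. When $n\leq 2$, the Sobolev embedding $H^1\hookrightarrow L^q$ for all finite $q$ (or $L^\infty$ for $n=1$) allows the same scheme to reach any prescribed $\overline{p}\geq 1$.

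The main obstacle I anticipate is tightening the Caccioppoli step: the ``error terms'' coming from Young's inequality on the cross term and from the constant $K$ in \eqref{equivalent_CR} must be absorbed uniformly, without using $f'\geq 0$ and with constants independent of $\sup_{B_1} f(u)$. Near $u=1$ the assumed concavity of $f^{-\theta}$ forces $f$ to be convex with $f'>0$, so the potentially bad contributions live on a sublevel set $\{u\leq 1-\delta\}$ where $f(u)$ is uniformly bounded; there they can be dominated by the $L^p$ norms of $f(u)$ already controlled by the previous step of the iteration.
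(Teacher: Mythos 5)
Your derivation of the Caccioppoli-type inequality (steps one and two) is correct and is in fact equivalent to the paper's Lemma~\ref{basic_lemma_f(u)} combined with the substitution $\eta \mapsto f(u)^{\alpha-1}\zeta$: testing the equation against $\psi(u)\eta^2$ is exactly what the rewritten stability inequality \eqref{stability_cond_with_c_eta} with $c = f(u)$ does implicitly, and your admissibility constraint $\alpha < 1 + \sqrt{1+\theta}$ agrees with the paper's range $\beta \in (0,\sqrt{1+\theta}-1)$, $\alpha = 2+\beta$. The genuine difference in strategy is afterward: you propose a Moser iteration from $L^1$ through intermediate exponents, while the paper takes $\alpha = 2+\beta$ directly, applies Sobolev once, bounds the right-hand side via H\"older and $W^{2,p}$ elliptic estimates for $u$, and closes with Simon's scaling lemma. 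Both routes are viable in principle; the iteration is more standard but requires uniform control of the error terms across stages, which is precisely where your proposal runs into trouble.

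The gap is in your treatment of the $K$-error term $K\int f(u)^{2\alpha-2}\abs{\nabla u}^2\eta^2$. You argue that, since $f^{-\theta}$ is concave only near $1$, the error is supported on a sublevel set $\{u \le 1-\delta\}$ where $f(u)$ is uniformly bounded, and can therefore be absorbed by norms already in hand. But the resulting bound necessarily involves $\sup_{[0,1-\delta]} f$, and this quantity is \emph{not} controlled by the constant $K$ in \eqref{equivalent_CR} (nor by $n$, $\theta$, $\beta$, $\overline{p}$); it is an independent feature of $f$. Since the statement requires $C$ to depend only on $n,\theta,K,\beta,\overline{p}$, the sublevel-set argument cannot close. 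The correct way to handle this term, which is what the paper does, is a H\"older split of $\int f(u)^{2\alpha-2}\abs{\nabla u}^2 \eta^2$, conversion of $\norm{\nabla u}_{L^{2q}}$ into $\norm{\Delta u}_{L^{\overline p - \nu}} = \norm{f(u)}_{L^{\overline p - \nu}}$ via elliptic $W^{2,p}$ estimates, an interpolation inequality introducing a small multiple of $\norm{f(u)}_{L^{\overline p}}$, and finally Simon's lemma (via rescaling) to absorb that multiple. This machinery could in principle be inserted at each stage of your iteration, but then your scheme essentially reproduces the paper's argument; the sublevel shortcut does not work. There are also minor technical points you gloss over, notably that $\xi = f(u)^{\alpha}\eta$ need not be Lipschitz for $\alpha < 1$ (requiring a truncation as in Remark~\ref{rem:tau0sec2}) and that $\psi'(u)$ involves $f(u)^{2\alpha-2}$, a negative power at $\alpha = 1/2$, which demands justification where $f(u)$ vanishes; but these are secondary to the constant-dependence issue.
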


The proof of the global estimate, Theorem \ref{theorem_global}, is analogous to the first proof of Theorem~\ref{theorem_interior} given in Section~\ref{interior_through_powers}. Hence, it relies on a global analogue of Proposition~\ref{proposition_first_proof}, namely, Proposition~\ref{proposition_global_Lp} below. To prove it, as we explain at the end of this section, we will use $\xi = f(u) \tilde{f}(u)^{1+ \beta}$, with $\tilde{f}(u):=f(u)-f(0)$, as a test function in the stability inequality \eqref{stability_definition}. Notice that, since $\lim_{t\to 1^-}f(t)=+\infty$ and $f'\geq 0$ in Proposition \ref{proposition_global_Lp} below, then $f(t)/\tilde{f}(t)\downarrow 1$ as $t\uparrow 1$. It follows that, for every $\varepsilon>0$, $f(0)\leq \varepsilon f(t)$ for $t$ sufficiently close to $1$. Hence,
\begin{equation*}
    \liminf_{t\to 1^-} \frac{\tilde{f}(t)f''(t)}{f'(t)^2}=\liminf_{t\to 1^-} \frac{f(t)f''(t)}{f'(t)^2}> 1.
\end{equation*}
As a consequence,
\begin{equation}\label{equivalent_CR_for_tildef}
    f''(t)\tilde{f}(t) \geq (1+ \theta)f'(t)^2 - \tilde{K} \quad \text{for all }0\leq t < 1,
\end{equation}
for some $\theta >0$ and $\tilde{K}\geq 0$.

\begin{proposition}\label{proposition_global_Lp}
Let $\Omega \subset \mathbb{R}^n$ be a smooth bounded domain and let $u\in C^0(\overline{\Omega})\cap C^2({\Omega})$ be a solution of
\begin{equation*}
\left\{
\begin{alignedat}{2}
-\Delta u  &= f(u) \quad &&\text{in }\Omega\\
u&=0 \quad &&\text{on }\partial \Omega\\ 
\end{alignedat}
\right. 
\end{equation*}
for some function $f\in C^2([0,1))$ satisfying $f\geq 0$, $f'\geq 0$, $f''\geq 0$, and $\lim_{t\to 1^-}f(t)=+\infty$. Assume that $f^{-\theta}$ is concave near 1 for some $\theta>0$. In particular, we have that \eqref{CR_condition}, \eqref{equivalent_CR}, and \eqref{equivalent_CR_for_tildef} hold for some $K\geq 0$ and $\tilde{K}\geq 0$ (and $\underline{\gamma}\geq 1+\theta$).

For every $\beta \in (0, \sqrt{1+\theta}-1)$, if $\overline{p} = (2+ \beta)2n/(n-2)$ when $n\geq 3$, and $\overline{p}\geq 1$ is arbitrary when $n \leq 2$, then
\begin{equation}\label{estimate_global_lp}
    \norm{f(u)}_{L^{\overline{p}}(\Omega)}\leq C \left(f(0)+\norm{f'(u)f(u)}_{L^1(\Omega)}\right),
\end{equation}
where $C$ is a constant depending only on $\Omega$, $\theta$, $K$, $\tilde{K}$,  $\beta$, and $\overline{p}$.
\end{proposition}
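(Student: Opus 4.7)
The plan is to test the stability inequality \eqref{stability_definition} against $\xi := f(u)\tilde f(u)^{1+\beta}$ (as announced before the statement), then bound the right-hand side by combining integration by parts (using $-\Delta u = f(u)$) with the Crandall-Rabinowitz bound \eqref{equivalent_CR_for_tildef}, and finally appeal to the Sobolev embedding. Since $u = 0$ on $\partial\Omega$ and $\tilde f(0) = 0$, the function $\xi$ lies in $H^1_0(\Omega)$, and by density it is admissible. Writing $g(t) := f(t)\tilde f(t)^{1+\beta}$, so that $g'(t) = f'(t)\tilde f(t)^\beta[(2+\beta)f(t)-f(0)]$, stability gives
$$\int_\Omega f'(u)f(u)^2\tilde f(u)^{2(1+\beta)}\,dx \le \int_\Omega g'(u)^2 \abs{\nabla u}^2\,dx.$$

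To estimate the right-hand side, I would first dominate $f'(u)^2$ via \eqref{equivalent_CR_for_tildef} by $(f''(u)\tilde f(u)+\tilde K)/(1+\theta)$, and then integrate the main term (the one involving $f''(u)$) by parts, writing $f''(u)\abs{\nabla u}^2 = \nabla u\cdot\nabla f'(u)$ and using $-\Delta u = f(u)$. The boundary terms vanish because the multiplying factor $\Phi(u) := \tilde f(u)^{1+2\beta}[(2+\beta)f(u)-f(0)]^2$ has $\Phi(0)=0$ thanks to $\tilde f(0)=0$. The "gradient" term $-\int f'(u)\Phi'(u)\abs{\nabla u}^2$ produced by this integration by parts equals, to leading order, $-(3+2\beta)\int g'(u)^2\abs{\nabla u}^2$. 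Combining this absorption on the left with the factor $1/(1+\theta)$ from Crandall-Rabinowitz produces the combined denominator $4+2\beta+\theta$:
$$\int_\Omega g'(u)^2\abs{\nabla u}^2 \le \frac{1}{4+2\beta+\theta}\int_\Omega f'(u)f(u)\tilde f(u)^{1+2\beta}[(2+\beta)f(u)-f(0)]^2 \,dx + (\text{error}).$$
Bounding $(2+\beta)f(u)-f(0) \le (2+\beta)f(u)$ and $\tilde f(u)\le f(u)$, while recognizing that the stability left-hand side is comparable to $\int f'(u)f(u)^{4+2\beta}$, yields a self-improving inequality for $\int f'(u)f(u)^{4+2\beta}$ with coefficient $(2+\beta)^2/(4+2\beta+\theta)$. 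This coefficient is strictly less than $1$ exactly when $\beta^2+2\beta<\theta$, i.e., $\beta<\sqrt{1+\theta}-1$, which is the standing hypothesis. Absorption then gives a bound on $\int f'(u)f(u)^{4+2\beta}$, and hence on $\norm{\nabla\xi}_{L^2}^2$, by the remaining error terms.

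To finish, I would estimate those error terms---which involve $\tilde K\int\tilde f^{2\beta}P^2\abs{\nabla u}^2$ and similar---in terms of $(f(0)+\norm{f'(u)f(u)}_{L^1(\Omega)})^{2(2+\beta)}$, using the identity $\int f'(u)f(u)\,dx = \int f''(u)\abs{\nabla u}^2\,dx + f'(0)\int f(u)\,dx$ (obtained by multiplying $-\Delta u = f(u)$ by $f'(u)$ and integrating by parts) together with an elementary interpolation that controls intermediate powers of $f(u)$. Then the Sobolev inequality $\norm{\xi}_{L^{2^*}(\Omega)} \le C_S\norm{\nabla\xi}_{L^2(\Omega)}$ (any $p\ge 1$ when $n\le 2$), combined with the pointwise bound $\xi \ge 2^{-(1+\beta)}f(u)^{2+\beta}$ on $\{f(u)\ge 2f(0)\}$ and the trivial bound $f(u)\le 2f(0)$ on its complement, transfers the energy estimate to $\norm{f(u)}_{L^{\bar p}(\Omega)}^{2+\beta}\le C\norm{\nabla\xi}_{L^2(\Omega)} + C f(0)^{2+\beta}$, which is \eqref{estimate_global_lp} after taking $(2+\beta)$-th roots.

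The main obstacle is the sharp algebraic balance in the absorption step: the two independent gains of $3+2\beta$ (from the integration by parts) and $1+\theta$ (from the Crandall-Rabinowitz condition) have to combine additively to produce the denominator $4+2\beta+\theta$ matching the critical exponent $\beta<\sqrt{1+\theta}-1$ of the hypothesis. A secondary technical issue is the careful bookkeeping of the lower-order contributions involving $\tilde K$ and $f(0)$, so that they all fit into a single power of $f(0)+\norm{f'(u)f(u)}_{L^1(\Omega)}$ of the correct size.
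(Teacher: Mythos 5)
Your choice of test function $\xi = f(u)\tilde f(u)^{1+\beta}$ matches the paper's (the paper writes it as $c\eta$ with $c=f(u)$, $\eta=\tilde f(u)^{1+\beta}$ in the reformulated stability inequality \eqref{stability_cond_with_c_eta}), and the critical exponent $(1+\beta)^2<1+\theta$ you arrive at is the right one. However, there is a genuine gap in the absorption step.

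The problem is the sentence ``\emph{recognizing that the stability left-hand side is comparable to $\int f'(u)f(u)^{4+2\beta}$, yields a self-improving inequality for $\int f'(u)f(u)^{4+2\beta}$ with coefficient $(2+\beta)^2/(4+2\beta+\theta)$}.'' The stability left-hand side is $\int f'(u)f(u)^2\tilde f(u)^{2+2\beta}$, and since $\tilde f(u)<f(u)$ pointwise with no positive lower bound for the ratio near $u=0$, this quantity is bounded \emph{above} by $\int f'(u)f(u)^{4+2\beta}$, not below. The best one can do is restrict to $\{f(u)\geq 2f(0)\}$, where $\tilde f(u)\geq f(u)/2$, getting a lower bound with multiplicative constant $2^{-(2+2\beta)}\leq 1/4$. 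Then the ``self-improving'' inequality becomes
\begin{equation*}
2^{-(2+2\beta)}\int_{\{f(u)\geq 2f(0)\}}f'(u)f(u)^{4+2\beta}\,\dd x \leq \frac{(2+\beta)^2}{4+2\beta+\theta}\int_\Omega f'(u)f(u)^{4+2\beta}\,\dd x + \text{error},
\end{equation*}
which absorbs only if $(2+\beta)^2/(4+2\beta+\theta)<2^{-(2+2\beta)}$, i.e.\ only when $\theta$ is quite large (certainly $\theta>12$), far stronger than the stated hypothesis $\beta^2+2\beta<\theta$. So the absorption as you describe it does not close.

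There are two further ``leading order'' approximations that also lose constants (and would again spoil the sharp absorption near the critical $\beta$): one has $f'(u)\Phi'(u) = (3+2\beta)g'(u)^2 - R(u)$ with $R\geq 0$ (so the gradient term absorbs slightly less than $(3+2\beta)$), and $f(u)\Phi(u)/g(u)^2 = P^2/(f(u)\tilde f(u))$ is strictly larger than $(2+\beta)^2$, diverging as $u\to 0$. The fix is to run the absorption on $\int f'(u)g(u)^2$ itself (never replacing $g(u)^2$ by $f(u)^{4+2\beta}$), using the exact identity $f\Phi-(2+\beta)^2 g^2 = g^2\bigl[(2+\beta)\beta\,f(0)/f + (1+\beta)^2 f(0)^2/(f\tilde f)\bigr]$ and treating the bracketed term and $R$ as additive $f(0)$-errors, which requires a separate dichotomy near $u=0$. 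The paper avoids all of this by working with the reformulated stability inequality with $c=f(u)$ (so the integration by parts and the Crandall--Rabinowitz bound are already packaged in Lemma~\ref{basic_lemma_f(u)}), then choosing $\eta=\tilde f(u)^{1+\beta}$ and applying Young's inequality $f(u)^2=(\tilde f(u)+f(0))^2\leq(1+\delta)\tilde f(u)^2+C_\delta f(0)^2$; since the absorbed quantity $\int\tilde f(u)^{2+2\beta}|\nabla f(u)|^2\,\dd x$ then appears identically on both sides, the absorption is exact and the only slack is the $\delta$ in Young's, which is harmless because $(1+\beta)^2<1+\theta$ is a strict inequality.
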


Estimate \eqref{estimate_global_lp} is rather natural given that, when $f$ is convex, $f(u)$ is a subsolution of an elliptic equation with right-hand side $f'(u)f(u)$. Indeed:
\begin{equation}\label{Lapl_f}
    -\Delta f(u) = -f''(u)\abs{\nabla u}^2 + f'(u)f(u) \leq f'(u)f(u) \quad \text{in }\Omega.
\end{equation}
The term $f(0)$ in \eqref{estimate_global_lp} is just the boundary value of $f(u)$.

To prove the previous propositions, it is crucial to understand the stability inequality~\eqref{stability_definition} as follows---as in previous papers for regular nonlinearities. First, note that, by approximation, \eqref{stability_definition} will hold for all test functions $\xi \in C^{0,1}(\overline{\Omega})$ with $\xi\vert_{\partial \Omega} = 0$. We then write $\xi$ as $\xi = c \eta$, with $c \in C^2(\Omega)$ and $\eta\in C^{0,1}(\overline{\Omega})$ satisfying $(c\eta) \vert_{\partial \Omega}=0$---the key point here is that $c$ may not vanish on $\partial \Omega$ whenever we take $\eta\vert_{\partial \Omega}=0$. Then, a simple integration by parts gives the following equivalent stability inequality:
\begin{equation}\label{stability_cond_with_c_eta}
    \int_\Omega (\Delta c + f'(u)c)c \eta^2 \dd x \leq \int_\Omega c^2 \abs{\nabla \eta}^2\dd x \quad  \text{for all }\eta \in C^{0,1}(\overline{\Omega}) \text{ with }(c\eta)\vert_{\partial \Omega}=0.
\end{equation}

All our proofs start with the choice $c=f(u)=-\Delta u$ in \eqref{stability_cond_with_c_eta}. This choice immediately yields the following estimate for $\abs{\nabla f(u)}$, that we prove next, right after its statement.

\begin{lemma}\label{basic_lemma_f(u)}
Let $\Omega\subset \mathbb{R}^n$ be a smooth  bounded domain, and let $u\in C^2\left(\Omega\right)$ be a nonnegative stable solution of $-\Delta u = f(u)$ in $\Omega \subset \mathbb{R}^n$ for some function $f\in C^2([0,1))$. Assume that $f^{-\theta}$ is concave near 1 for some $\theta>0$. In particular, we have that \eqref{CR_condition} and \eqref{equivalent_CR} hold for some $K\geq 0$ (and $\underline{\gamma}\geq 1+\theta$). 

Then,
\begin{equation}\label{stability_with_fu_and_eta}
    \left(1+\theta\right) \int_{\Omega} \abs{\nabla f(u)}^2 \eta^2\dd x \leq \int_{\Omega} f(u)^2 \abs{\nabla \eta}^2\dd x + K \int_{\Omega} \abs{\nabla u}^2 \eta^2 \dd x
\end{equation}
for all $\eta\in C^{0,1}(\overline\Omega)$ such that $\eta \vert_{\partial \Omega} = 0$.
\end{lemma}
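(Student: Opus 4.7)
The plan is to plug $c = f(u)$ directly into the equivalent stability inequality \eqref{stability_cond_with_c_eta}. The whole point of that reformulation is that choosing $c$ to be (close to) a solution of the linearized equation makes the left-hand side collapse to something involving $f''$, which is exactly the quantity controlled by the Crandall-Rabinowitz hypothesis \eqref{equivalent_CR}.

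First, I would compute the relevant derivatives. Since $-\Delta u = f(u)$ in $\Omega$, one has $\nabla f(u) = f'(u)\nabla u$ and
\begin{equation*}
    \Delta f(u) = f''(u)\abs{\nabla u}^2 + f'(u)\Delta u = f''(u)\abs{\nabla u}^2 - f'(u)f(u).
\end{equation*}
Hence with $c = f(u)$, the expression $\Delta c + f'(u) c$ simplifies beautifully to $f''(u)\abs{\nabla u}^2$, and \eqref{stability_cond_with_c_eta} becomes
\begin{equation*}
    \int_\Omega f''(u) f(u) \abs{\nabla u}^2 \eta^2 \dd x \leq \int_\Omega f(u)^2 \abs{\nabla \eta}^2 \dd x.
\end{equation*}

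Next, I would invoke the pointwise bound \eqref{equivalent_CR}, namely $f''(t) f(t) \geq (1+\theta) f'(t)^2 - K$ for all $t\in[0,1)$, which is available because $f^{-\theta}$ is concave near $1$ (and \eqref{equivalent_CR} extends to all of $[0,1)$ by choosing $K$ large enough to absorb the deviation on the compact interval where concavity might fail). Multiplying by $\abs{\nabla u}^2 \eta^2$, integrating, and using $\abs{\nabla f(u)}^2 = f'(u)^2 \abs{\nabla u}^2$ yields
\begin{equation*}
    (1+\theta)\int_\Omega \abs{\nabla f(u)}^2 \eta^2 \dd x - K\int_\Omega \abs{\nabla u}^2 \eta^2 \dd x \leq \int_\Omega f(u)^2 \abs{\nabla \eta}^2 \dd x,
\end{equation*}
which is exactly \eqref{stability_with_fu_and_eta}.

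The only genuine obstacle is making the choice $\xi = f(u)\eta$ admissible in the stability inequality. Since $u$ is merely $C^2(\Omega)$ and $\eta\in C^{0,1}(\overline\Omega)$ vanishes only on $\partial\Omega$, the product $f(u)\eta$ need not be globally Lipschitz if $u$ approaches $1$ near $\partial\Omega$. I would bypass this with a standard cutoff approximation: replace $\eta$ by $\eta\psi_\varepsilon$, where $\psi_\varepsilon\in C^\infty_c(\Omega)$ equals $1$ on $\{\mathrm{dist}(\cdot,\partial\Omega)>2\varepsilon\}$ and $0$ on $\{\mathrm{dist}(\cdot,\partial\Omega)<\varepsilon\}$ with $\abs{\nabla\psi_\varepsilon}\lesssim 1/\varepsilon$, so that $f(u)\eta\psi_\varepsilon$ is smooth and compactly supported; then apply the argument above on the interior region (where $f(u)$ is bounded), and pass to the limit $\varepsilon\downarrow 0$ using monotone convergence for the terms with $\eta^2$ and the fact that $\eta=0$ on $\partial\Omega$ to control the boundary layer contributions from $\abs{\nabla(\eta\psi_\varepsilon)}^2$. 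This step is technical but entirely routine.
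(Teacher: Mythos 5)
Your proof is correct and follows essentially the same route as the paper: take $c=f(u)$ in \eqref{stability_cond_with_c_eta}, use the identity $\Delta f(u)+f'(u)f(u)=f''(u)\abs{\nabla u}^2$ (this is the equality in \eqref{Lapl_f}) to get $\int_\Omega f''(u)f(u)\abs{\nabla u}^2\eta^2\,\dd x\le\int_\Omega f(u)^2\abs{\nabla\eta}^2\,\dd x$, then apply \eqref{equivalent_CR} together with $\abs{\nabla f(u)}^2=f'(u)^2\abs{\nabla u}^2$. The paper treats the admissibility of $\xi=f(u)\eta$ as already settled by the discussion of \eqref{stability_cond_with_c_eta} in the introduction; your interior cutoff $\psi_\varepsilon$ is a fine extra safeguard, though in every actual application of this lemma either $\eta$ is compactly supported or $u$ is continuous up to $\partial\Omega$ with $\max_{\overline\Omega}u<1$, so $f(u)$ is bounded on the support of $\eta$ and no approximation is needed.
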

\begin{proof}
We choose $c=f(u)$ in the ``new'' stability inequality \eqref{stability_cond_with_c_eta} and use the equality in~\eqref{Lapl_f} to obtain
\begin{equation}\label{basic_lemma_eq_1}
    \int_{\Omega}  f''(u)f(u)\abs{\nabla u }^2 \eta^2 \dd x \leq \int_{\Omega} f(u)^2 \abs{\nabla \eta}^2 \dd x.
\end{equation}
Then, recalling \eqref{equivalent_CR} we get \eqref{stability_with_fu_and_eta}.
\end{proof}

Now, our first proof of the interior estimate, Theorem \ref{theorem_interior}, given in Section~\ref{interior_through_powers}, is an immediate consequence of the $L^{\overline{p}}$ estimate for $f(u)$ of Proposition \ref{proposition_first_proof}. To prove them, we choose $\eta = f(u)^{1+\beta}\zeta$ in the basic inequality \eqref{stability_with_fu_and_eta} for some cut-off function $\zeta$. This, using some interpolation arguments, will allow us to control the $L^2$ norm of the gradient of a power of $f(u)$ by the $L^1$ norm of the solution. Then, by Sobolev's inequality, we will deduce that $f(u)\in L^{\overline{p}}$ for $\overline{p} = (2+\beta)2n/(n-2)$ and $\beta>0$ a small constant. From this we obtain that $F(u)\in L^\infty_{\mathrm{loc}}(\Omega)$ whenever $\overline{p}>n$, i.e., whenever $n\leq 6$.

Our second proof of Theorem \ref{theorem_interior}, given in Section \ref{interior_powers_of_mod_x}, uses two different choices of $\eta$ in \eqref{stability_with_fu_and_eta}. The first one is $\eta=\zeta$, with $\zeta$ being simply a cut-off. This will yield the estimate
\begin{equation}\label{intro:estimate_w12_of_v}
    \norm{ f(u)}_{W^{1,2}(B_{1/2})}\leq C \norm{u}_{L^1(B_{3/4})}.
\end{equation}
The second choice is $\eta =\abs{x}^{\frac{4-n}{2}}\zeta$ in \eqref{stability_with_fu_and_eta}.\footnote{We note that the function $\eta$ is not Lipschitz. However, as we will explain in the proof of Proposition \ref{proposition_v2_convolution}, we will approximate it by Lipschitz functions, in such a way that \eqref{stability_with_fu_and_eta} also holds for this choice of $\eta$.} Using a weighted Hardy inequality and \eqref{intro:estimate_w12_of_v}, we will obtain the Morrey type estimate
\begin{equation}\label{intro:morrey_estimate}
    \int_{B_{3/16}} f(u)^2 \abs{x}^{2-n}\dd x \leq C \norm{u}_{L^1(B_{3/4})}^2.
\end{equation}
In particular, by translating the inequality above, we are able to control the convolution of $f(u)^2$ with the function $\abs{x}^{2-n}$. Since this last function is the fundamental solution of the Laplacian in $\mathbb{R}^n$ for $n\geq 3$, the interior estimate \eqref{estimate_theorem_interior} follows from \eqref{Fu_is_subsolution}, \eqref{intro:morrey_estimate}, and the maximum~principle.

The global estimate in dimensions $3\leq n \leq 6$, Theorem \ref{theorem_global}, is a straightforward consequence of the~$L^{\overline{p}}$ estimate for $f(u)$ in Proposition \ref{proposition_global_Lp}. Similarly to the first proof of the interior bound, to prove Proposition \ref{proposition_global_Lp} we choose \mbox{$\eta=(f(u)-f(0))^{1+ \beta}$} in \eqref{basic_lemma_eq_1}. This leads, using interpolation arguments, to \eqref{estimate_global_lp}. From it and \eqref{Fu_is_subsolution}, we will deduce that $F(u)$ is bounded if we can show that 
\begin{equation}\label{intro:bound_on_ff'}
    \int_\Omega f'(u)f(u)\dd x \leq C,
\end{equation}
for some constant $C$ depending possibly on $\Omega$ and $f$, but not on $u$. Estimate \eqref{intro:bound_on_ff'}, which follows from the choice $\xi=f(u)-f(0)$ in the stability inequality \eqref{stability_definition}, was proved by Ye and Zhou in \cite[eq. (8)]{YeZhou_OnAGeneralFamily}. Below we include a proof for completeness. Note that \eqref{intro:bound_on_ff'} had been first obtained, with the same method, by Nedev \cite[eq. (7)]{Nedev2000} for the problem with regular nonlinearities. 

In turn, \eqref{intro:bound_on_ff'}, together with \eqref{Lapl_f} and the inequality in \eqref{Fu_is_subsolution}, constitutes the main ingredient of our global regularity result in dimension $n\leq 2$, which does not require any Crandall-Rabinowitz condition.

\section{Interior estimates through powers of the Laplacian}\label{interior_through_powers}
We first prove the $L^{\overline{p}}$ estimate for $v:=f(u)=-\Delta u$.
\begin{proof}[Proof of Proposition \ref{proposition_first_proof}]
To simplify the notation we write
\begin{equation*}
v=f(u).
\end{equation*}
 Note that $v$ is a nonnegative $C^2$ function, since both $u$ and $f$ are $C^2$, and $f\geq 0$. We choose $\beta>0$, depending only on $\theta$, satisfying \eqref{choice_beta} below. We set $\eta = v^{1+\beta}\zeta$ in \eqref{stability_with_fu_and_eta} (which we proved in the introduction), where $\zeta\in C^{\infty}_c(B_1)$ will be a cut off function. We have
\begin{equation*}
    \nabla \eta = (1+\beta)v^{\beta}\nabla v \zeta + v^{1+\beta}\nabla \zeta.
\end{equation*}
By Cauchy-Schwarz's and Young's inequalities, for every $\delta>0$ there exists a constant $C_{\delta}>0$ such that
\begin{equation}\label{eta:sec2}
    \abs{\nabla \eta}^2 \leq (1+\beta)^2(1+\delta)  v^{2 \beta} \abs{\nabla v}^{2}\zeta^2+C_{\delta}v^{2+2 \beta}\abs{\nabla \zeta}^2.
\end{equation}
Hence, using that $v^{1+\beta} \nabla v = \nabla v^{2+\beta}/\left({2+\beta}\right)$,  from \eqref{stability_with_fu_and_eta} we obtain
\begin{multline}\label{first_proof_stability_after_sub_eta}
    \frac{1+ \theta}{(2+\beta)^2} \int_{B_1} \abs{\nabla v^{2+\beta}}^2 \zeta^2 \dd x \leq \frac{(1+\beta)^2(1+\delta)}{(2+\beta)^2}\int_{B_1} \abs{\nabla v^{2+\beta}}^2 \zeta^2 \dd x  \\ 
    + C_{\delta,K} \int_{B_1} \left(v^{4+2 \beta} \abs{\nabla \zeta}^2 + \abs{\nabla u}^2 v^{2+2 \beta}\zeta^2  \right)\dd x,
\end{multline}
where $C_{\delta, K}$ only depends on $\delta$ and $K$. Now, for
\begin{equation}\label{choice_beta}
    0<\beta < \sqrt{1+ \theta}-1,
\end{equation}
choosing $\delta>0$ small enough, depending only on $\theta$, we obtain
\begin{equation}\label{gradsec2}
    \int_{B_1} \abs{\nabla v^{2+\beta}}^2 \zeta^2 \dd x \leq C  \int_{B_1} \left(v^{4+ 2\beta} \abs{\nabla \zeta}^2 + \abs{\nabla u}^2 v^{2+2 \beta}\zeta^2\right)\dd x,
\end{equation}
where $C$ depends only on $\beta$, $\theta$, and $K$.

We choose now $\zeta \in C^\infty_c(B_1)$ such that $\supp( \zeta) \subset B_{1/2} $, $\zeta = 1$ in $B_{1/4}$, and $0\leq \zeta \leq 1$. By Sobolev's inequality applied to $v^{2+\beta}\zeta$ and \eqref{gradsec2}, for $n\geq 3$ we get
\begin{equation}\label{inequality_with_sobolev}
    \left(\int_{B_{1/4}} v^{(2+\beta)2n/(n-2)}\dd x \right)^{(n-2)/n}\leq C \int_{B_{1/2}}\left(  v^{4+ 2\beta}+ \abs{\nabla u}^2 v^{2+2 \beta}\right)\dd x.
\end{equation}
Equivalently,
\begin{equation}\label{inequality_before_fixing_RHS}
    \norm{v}_{L^{\overline{p}}(B_{1/4})}^{2+\beta} \leq C\left( \norm{v^{2+\beta}}_{L^2(B_{1/2})}+\norm{\abs{\nabla u} v^{1+\beta}}_{L^2(B_{1/2})}\right),
\end{equation}
with $\overline{p} = ({2+\beta}){2n/(n-2)}$ if $n\geq 3$. Note that if $n\leq 2$, the same argument gives that \eqref{inequality_before_fixing_RHS} holds for every $\overline{p}\in [1,+\infty)$. The constant $C$ depends only on $n$, $\theta$, $K$, $\beta$, and $\overline{p}$. We next deal with the terms in the right-hand side of \eqref{inequality_before_fixing_RHS}.

First we bound the second term. For this, we choose $\nu\in \mathbb{R}$ satisfying $0<\nu<\overline{p}-2(2+\beta)$, and let
\begin{equation*}
    q = \left(1-\frac{2 (1+ \beta)}{\overline{p} - \nu}\right)^{-1} = (\overline{p} - \nu) \left[ \overline{p}- \nu - 2(1+ \beta)\right]^{-1}.
\end{equation*}
Notice that $2q < \overline{p} - \nu$. Since
\begin{equation*}
    \frac{1}{q}+\frac{2(1+ \beta)}{\overline{p}- \nu}=1,
\end{equation*}
using Hölder's inequality and standard elliptic regularity estimates, we obtain 
\begin{align*} 
    \norm{\abs{\nabla u}^2 v^{2(1+\beta)} }_{L^1(B_{1/2})} &\leq \norm{\abs{\nabla u}^2}_{L^q(B_{1/2})} \norm{v^{2(1+\beta)}}_{L^{(\overline{p} - \nu)/(2+2\beta)}(B_{1/2})}\\ 
    &=\norm{\nabla u}_{L^{2q}(B_{1/2})}^{2} \norm{v}_{L^{\overline{p} - \nu}(B_{1/2})}^{2(1+ \beta)} \\ 
    &\leq C \norm{\nabla u}_{L^{\overline{p} - \nu}(B_{1/2})}^{2} \norm{v}_{L^{\overline{p} - \nu}(B_{1/2})}^{2(1+ \beta)}\\
    &=C \norm{\nabla u}_{L^{\overline{p} - \nu}(B_{1/2})}^{2} \norm{\Delta u}_{L^{\overline{p} - \nu}(B_{1/2})}^{2(1+ \beta)}\\  
    &\leq C\norm{u}_{W^{2,\overline{p} - \nu}(B_{1/2})}^{2(2+\beta)}\\
    &\leq C\left(\norm{\Delta u}_{L^{\overline{p} - \nu}(B_{3/4})} + \norm{u}_{L^1(B_{3/4})}\right)^{2(2+\beta)},
\end{align*}
for some $C$ depending only on $n$, $\beta$, and $\overline{p}$. From this, using two interpolation inequalities we deduce that for every $\varepsilon>0$,
\begin{align*} 
    \norm{\abs{\nabla u}^2 v^{2(1+\beta)} }_{L^1(B_{1/2})} 
    &\leq C\left(\norm{\Delta u}_{L^{\overline{p} - \nu}(B_{3/4})} + \norm{u}_{L^1(B_{3/4})}\right)^{2(2+\beta)}\\ 
    &\leq \varepsilon \norm{\Delta u}_{L^{\overline{p}}(B_{3/4})}^{2(2+\beta)}+C_{\varepsilon}\left(\norm{\Delta u}_{L^1(B_{3/4})} + \norm{u}_{L^1(B_{3/4})}\right)^{2(2+\beta)}\\ 
    &\leq \varepsilon \norm{v}_{L^{\overline{p}}(B_1)}^{2(2+\beta)}+C_{\varepsilon}\norm{u}_{L^1(B_1)}^{2(2+\beta)},
\end{align*}
for some constant $C_{\varepsilon}>0$ depending only on $\varepsilon$, $n$, $\theta$, $K$, $\beta$, and $\overline{p}$. In the last inequality we have used that, since $\Delta u \leq 0$ in $B_1$, taking $\varphi \in C^\infty_c(B_1)$ such that $0\leq \varphi\leq 1$ and $\varphi = 1$ in $B_{3/4}$ we have
\begin{equation}\label{control_L1_deltau_by_L1_u}
    \norm{\Delta u }_{L^1(B_{3/4})}= \int_{B_{3/4}} \varphi \left(-\Delta u\right) \leq \int_{B_{1}} \varphi \left(-\Delta u\right) = \int_{B_{1}} \left(-\Delta \varphi  \right)u\leq C \norm{u}_{L^1(B_1)}.
\end{equation}
This bound can be found, for instance, in \cite[Lemma A.1]{CFRS}.

Next we deal with the first term in the right-hand side of \eqref{inequality_before_fixing_RHS}. By interpolation, for every $\varepsilon>0$ there exists a constant $C_{\varepsilon}>0$ depending only on $\varepsilon$, $n$, $\beta$, and $\overline{p}$ such that
\begin{align*}
\norm{v^{2+\beta}}_{L^2(B_{1/2})} &= \norm{v}_{L^{2(2+\beta)}(B_{1/2})}^{2+\beta} \\ 
&\leq \varepsilon \norm{v}_{L^{\overline{p}}(B_{1/2})}^{2+\beta}+C_{\varepsilon}\norm{v}_{L^1(B_{1/2})}^{2+\beta}\\ 
&\leq \varepsilon \norm{v}_{L^{\overline{p}}(B_{1})}^{2+\beta}+C_{\varepsilon}\norm{u}_{L^1(B_{1})}^{2+\beta},
\end{align*}
where, by \eqref{control_L1_deltau_by_L1_u}, since $\Delta u \leq 0$ in $B_1$, we have controlled the $L^1$ norm of $v=-\Delta u$ in $B_{1/2}$ by the $L^1$ norm of $u$ in $B_1$.

Hence, from \eqref{inequality_before_fixing_RHS} we obtain that, for every $\varepsilon>0$, 
\begin{equation*}
    \norm{v}_{L^{\overline{p}}(B_{1/4})} \leq \varepsilon \norm{v}_{L^{\overline{p}}(B_1)}+C_{\varepsilon}\norm{u}_{L^1(B_{1})},
\end{equation*}
for some constant $C_{\varepsilon}>0$ depending only on $\varepsilon$, $n$, $\theta$, $K$,  $\beta$, and $\overline{p}$.

Now, for every $B_r(y)\subset B_1$, let us consider the function $u_{r,y}(x):=u(y+rx)$. This function is a stable solution of the equation
\begin{equation*}
    -\Delta u_{r,y} = r^2 f(u_{r,y}) \quad \text{ in }B_1.
\end{equation*}
Notice that since the nonlinearity scales by a factor $r^2$, the constant $K$ in \eqref{equivalent_CR} scales by a factor $r^4$. However, since $r<1$, then $r^4K\leq K$ and thus we can apply the previous estimate to $v_{r,y}:=-\Delta u_{r,y}=r^2v(y+rx)$ and $u_{r,y}$. This gives
\begin{align*}
    r^2\norm{v(y+r\cdot)}_{L^{\overline{p}}(B_{1/4})} \leq& \varepsilon r^2\norm{v(y+r\cdot)}_{L^{\overline{p}}(B_1)}+C_{\varepsilon}\norm{u_{r,y}}_{L^1(B_{1})},
\end{align*}
and thus,
\begin{align*}
    r^{2-n/\overline{p}}\norm{v}_{L^{\overline{p}}(B_{r/4}(y))} \leq& \varepsilon r^{2-n/\overline{p}}\norm{v}_{L^{\overline{p}}(B_r(y))}+C_{\varepsilon}r^{-n}\norm{u}_{L^1(B_{r}(y))}.
\end{align*}
Multiplying through by $r^n$ we obtain
\begin{align*}
    r^{2-n/\overline{p}+n}\norm{v}_{L^{\overline{p}}(B_{r/4}(y))} \leq& \varepsilon r^{2-n/\overline{p}+n}\norm{v}_{L^{\overline{p}}(B_r(y))}+C_{\varepsilon}\norm{u}_{L^1(B_1)}.
\end{align*}

From this, taking $\varepsilon$ small enough, the claim of the proposition follows by Simon's lemma (which is stated below in Lemma \ref{Simon_lemma}) applied with $S(B_{r}(y))=\norm{v}_{L^{\overline{p}}(B_r(y))}$.
\end{proof}
The following is Simon's lemma \cite[pp. 398-399]{Simon1997}, as stated in \cite[Lemma 3.1]{CintiSerraValdinoci2019}; see either of both references for its simple proof.
\begin{lemma}\label{Simon_lemma}
Let $\alpha\in \mathbb{R}$ and $C_0>0$. Let $S: \mathcal{B}\to [0,+\infty]$ be a nonnegative function defined on the class $\mathcal{B}$ of open balls $B\subset \mathbb{R}^n$ and satisfying the following subaddivity property
\begin{equation*}
    B\subset \bigcup_{j=1}^{N}B_j \implies S(B)\leq \sum_{j=1}^{N} S(B_j).
\end{equation*}
Assume, moreover, that $S(B_1)<\infty$.

There exists $\delta= \delta(n, \alpha)$ such that, if
\begin{equation*}
    r^{\alpha} S(B_{r/4}(z)) \leq \delta r^{\alpha} S(B_{r}(z)) + C_0 \quad \text{whenever } B_r(z)\subset B_1,
\end{equation*}
then,
\begin{equation*}
    S(B_{1/2})\leq C C_0,
\end{equation*}
where $C$ is a constant depending only on $n$ and $\alpha$.
\end{lemma}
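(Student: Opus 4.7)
My plan is to iterate the hypothesis together with a Vitali-type covering and a self-improving bootstrap. First I rescale $S\mapsto S/C_0$ to normalize $C_0=1$, and introduce the scale-invariant quantity $\psi(r,z) := r^{\alpha}\,S(B_r(z))$. In these variables the hypothesis becomes the contraction
\[
\psi(r/4,z) \,\le\, \eta\,\psi(r,z) + \eta_0, \qquad \eta := \delta\cdot 4^{-\alpha}, \ \ \eta_0 := 4^{-\alpha},
\]
valid whenever $B_r(z)\subset B_1$. I will choose $\delta=\delta(n,\alpha)$ small enough at the end so that certain derived constants $C\eta<1$.

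Iterating this bound $k$ times on a fixed base ball $B_R(z_0)\subset B_1$ and summing the geometric series yields
\[
\psi(R/4^{k},z_0) \,\le\, \eta^{k}\,\psi(R,z_0) \,+\, \frac{\eta_0}{1-\eta}.
\]
Taking $R=1/2$ with $z_0\in B_{1/2}$ (so that $B_R(z_0)\subset B_1$), and using the crude bound $\psi(R,z_0)\le (1/2)^{\alpha}S(B_1)$, this controls $\psi(\rho,z_0)$ at any small scale $\rho = 4^{-k}/2$ in the interior by a universal constant plus an exponentially decaying contribution from $S(B_1)$.

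To pass from this pointwise control to the conclusion on $S(B_{1/2})$ I will cover $B_{1/2}$ by $N\lesssim \rho^{-n}$ balls $B_{\rho}(z_i)$ with centers in $B_{1/2}$, apply subadditivity, and sum. Writing $\rho = 4^{-k}/2$, a direct computation gives
\[
S(B_{1/2}) \,\le\, C(n,\alpha)\bigl[(4^{n}\delta)^{k}\,S(B_1) + 4^{k(n+\alpha)}\,C_0\bigr].
\]
To absorb the $S(B_1)$-dependence I will bootstrap via the weighted supremum $E := \sup\{\psi(r,z) : B_r(z)\subset B_1\}$, which is finite by monotonicity and the assumption $S(B_1)<\infty$. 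Applying the same iteration-plus-covering reasoning to an arbitrary $B_\rho(z)\subset B_1$, and treating balls near $\partial B_1$ by sub-covering them with finitely many interior balls whose $4$-fold parents still fit inside $B_1$, yields a self-improving inequality of the form $E \le C\eta\,E + C\,C_0$. Choosing $\delta$ so small that $C\eta\le 1/2$ then closes the bootstrap, giving $E\le 2C\,C_0$ and hence $S(B_{1/2}) \le 2^{\alpha}\,E \le C(n,\alpha)\,C_0$.

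The hard part will be controlling balls near $\partial B_1$ in the bootstrap: for such balls the hypothesis does not directly apply (since their $4$-fold parent exceeds $B_1$), so one must sub-cover them with balls centered safely inside $B_1$, and keep the covering multiplicity bounded in terms of $n$ alone. The required smallness of $\delta$ is dictated precisely by this multiplicity together with the geometric series appearing in the iteration, so the final threshold $\delta(n,\alpha)$ depends only on $n$ and $\alpha$ as claimed.
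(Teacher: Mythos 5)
Your overall strategy is the right one (a scale-invariant quantity $\psi(r,z)=r^{\alpha}S(B_r(z))$, iteration, covering, and absorption via a weighted supremum), and the iteration step is fine; but the bootstrap step has a genuine gap, and it sits exactly where you flag "the hard part."

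The supremum $E := \sup\{\psi(r,z) : B_r(z)\subset B_1\}$ includes balls tangent to $\partial B_1$, i.e.\ with $\mathrm{dist}(z,\partial B_1)=\rho$. Your plan is to sub-cover such a $B_\rho(z)$ by \emph{finitely many} balls $B_{s_i}(w_i)$ with $B_{4s_i}(w_i)\subset B_1$, but no such finite cover exists. Already for $n=1$, take $B_1=(-1,1)$ and $B_\rho(1-\rho)=(1-2\rho,1)$: any interval $B_s(w)$ with $B_{4s}(w)\subset B_1$ has right endpoint $w+s\leq 1-3s$, so a finite family with $s_i\geq s_{\min}>0$ leaves $(1-3s_{\min},1)$ uncovered. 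In general one needs a Whitney-type family shrinking towards the tangency point, which is infinite, and the lemma only provides finite subadditivity; moreover for $\alpha>1-n$ the weighted sum $\sum_i s_i^{-\alpha}$ over such a Whitney cover diverges. So the claimed self-improving inequality $E\leq C\eta E + CC_0$ is not derivable this way. (Separately, your assertion that $E<\infty$ ``by monotonicity'' is false when $\alpha<0$, since $r^{\alpha}\to\infty$ as $r\to 0$; this can be repaired by truncation, or by observing that the hypothesis for $\alpha<0$ implies the one for $\alpha=0$ because $r^{|\alpha|}\leq 1$ when $B_r(z)\subset B_1$.)

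The correct absorption builds a margin into the supremum: set $Q:=\sup\{r^{\alpha}S(B_r(z)) : B_{\beta r}(z)\subset B_1\}$ for some fixed $\beta>4$, e.g.\ $\beta=8$. Then $S(B_{1/2})\leq C(n,\alpha)Q$ by covering $B_{1/2}$ with $N(n)$ balls of radius $1/16$ centered in $B_{1/2}$ (whose $8$-fold parents lie in $B_1$, since $\mathrm{dist}(\cdot,\partial B_1)>1/2$). For a near-maximizer $(\rho_0,z_0)$, one has $\mathrm{dist}(z_0,\partial B_1)\geq 8\rho_0$, so $B_{4\rho_0}(z_0)\subset B_1$ and the hypothesis applies; and since every point of $B_{4\rho_0}(z_0)$ is at distance $\geq 4\rho_0$ from $\partial B_1$, this ball \emph{can} be covered by a dimensional number of balls of the \emph{comparable} radius $\rho_0/2$ whose $8$-fold parents lie in $B_1$. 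This yields $\rho_0^{\alpha}S(B_{\rho_0}(z_0))\leq C(n,\alpha)\delta\,Q + C(n,\alpha)C_0$, and for $\delta$ small one absorbs to get $Q\leq CC_0$. No Whitney cover and no iteration over $k$ are needed once the margin is in place.
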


We now give our first proof of Theorem \ref{theorem_interior}.
\begin{proof}[First proof of Theorem \ref{theorem_interior}]
In Proposition \ref{proposition_first_proof} we choose $\beta \in (0,\sqrt{1+\theta}-1)$. Note that, in its statement, $\overline{p}>n(2+\beta)2/(n-2)>4n/(n-2)\geq n$ (since $n\leq 6$) and also $\overline{p}>(2+\beta)2>4$ (we can also have both inequalities for $n\leq 2$, since in this case $\overline{p}\geq 1$ can be taken arbitrarily). With these choices, the constant $C$ in Proposition \ref{proposition_first_proof} only depends on $n$, $\theta$, and $K$.

We keep the notation $v=f(u)$. Notice first that
\begin{equation*}
    -\Delta F(u) = -f'(u)\abs{\nabla u}^2 + f(u)^2 \leq v^2 \quad \text{in }B_1.
\end{equation*}
Since $F(u)\geq 0$ and $\overline{p}/2>n/2$, the local estimate for nonnegative subsolutions of \cite[Theorem 8.17]{GT} gives that
\begin{align}
    \norm{F(u)}_{L^\infty(B_{1/8})} &\leq C\left( \norm{F(u)}_{L^2(B_{1/4})}+\norm{v^2}_{L^{\overline{p}/2}(B_{1/4})}\right) \nonumber\\ 
    &= C\left( \norm{F(u)}_{L^2(B_{1/4})}+\norm{v}_{L^{\overline{p}}(B_{1/4})}^2\right) \label{estimate_from_GT_for_subsol},
\end{align}
for some constant $C$ depending only (as in the rest of the proof) on $n$, $\theta$, and $K$.

On the other hand, since $f$ is nondecreasing, $0\leq F(u)\leq f(u)u=vu$. Therefore, by Hölder's inequality and standard elliptic estimates we obtain
\begin{align}
    \norm{F(u)}_{L^2(B_{1/4})} &\leq \norm{v u}_{L^2(B_{1/4})} \nonumber\\ 
    &\leq C\left(\norm{v}_{L^4(B_{1/4})}^2+\norm{u}_{L^4(B_{1/4})}^2\right) \nonumber\\
    &\leq C\left(\norm{v}_{L^{\overline{p}}(B_{1/4})}^2+\norm{u}_{W^{2,\overline{p}}(B_{1/4})}^2\right) \nonumber\\
    &\leq C\left(\norm{\Delta u}_{L^{\overline{p}}(B_{1/2})}^2+\norm{u}_{L^1(B_{1/2})}^2\right) \nonumber \\ 
    &= C(\norm{f(u)}^2_{L^{\overline{p}}(B_{1/2})} + \norm{u}^2_{L^1(B_{1/2})} ) \nonumber \\ 
    &\leq C \norm{u}^2_{L^{1}(B_1)},\label{bound_for_L2_of_F}
\end{align}
where in the last inequality we have used Proposition \ref{proposition_first_proof}. 

From \eqref{estimate_from_GT_for_subsol} and \eqref{bound_for_L2_of_F} we deduce a bound for $\norm{F(u)}_{L^\infty(B_{1/8})}$. Finally, a standard covering argument for $B_{1/2}$ gives the same bound for $\norm{F(u)}_{L^\infty(B_{1/2})}$.
\end{proof}
\begin{remark}\label{rem:tau0sec2}
Allowing $0<\underline{\gamma}< 1$ (and thus $-1<\theta < \underline{\gamma}-1< 0$; see the discussion above \eqref{fminusgamma}) in the proof of Proposition \ref{proposition_first_proof} leads to $-1<\beta<0$ (see \eqref{choice_beta}). Hence, the proposition gives
\begin{equation*}
    \norm{f(u)}_{L^{\overline{p}}(B_{1/2})}\leq C \norm{u}_{L^1(B_1)}
\end{equation*}
with $\overline{p} = (2+ \beta) 2n/(n-2)$ for every $\beta \in (-1, \sqrt{1 + \theta}-1)$. As a consequence, Theorem \ref{theorem_interior} holds whenever $\overline{p}>n$, i.e., whenever $n<4 + 2 \sqrt{\underline{\gamma}}$. 

Notice, however, that when $\beta<0$, the test function $\eta = f(u)^{1+ \beta}\zeta$ is not Lipschitz. Therefore, we must approximate it by the Lipschitz functions
\begin{equation}
    \eta_{\varepsilon} := f_{\varepsilon}(u)^{1+ \beta} \zeta := \max\{f^{1+ \beta}(u), f(\varepsilon)^{1+ \beta}\} \zeta,
\end{equation}
for $\varepsilon \in (0,1)$. Since $\eta_{\varepsilon} \to \eta$ and $\nabla \eta_{\varepsilon} \to \nabla \eta$ almost everywhere in $B_1$ as $\varepsilon \to 0$, by the dominated convergence theorem, since $u\in C^2(\overline{B_1})$, we deduce that \eqref{first_proof_stability_after_sub_eta} also holds with $\eta = f(u)^{1+ \beta} \zeta$.

If $\underline{\gamma}=1$, notice that $n<4+2\sqrt{\underline{\gamma}}$ gives the same dimensions for regularity as $n<4+2\sqrt{\underline{\gamma}-\varepsilon}$ with $\varepsilon>0$ small enough, therefore there is no loss in assuming $\underline{\gamma}<1$.
\end{remark}

\section{Interior estimates through powers of $\abs{x}$}\label{interior_powers_of_mod_x}
We first prove the following $W^{1,2}$ estimate for $f(u) = -\Delta u$. It is an easy consequence of Lemma \ref{basic_lemma_f(u)} applied with $\eta=\zeta$ for some cut-off function $\zeta$.
\begin{lemma}\label{lemma_estimate_v2}
Let $u\in C^2 ({B_1})$ be a nonnegative stable solution of $-\Delta u = f(u)$ in $B_{1}\subset \mathbb{R}^n$ for some function $f\in C^2([0,1))$ satisfying $f\geq 0$. Assume that $f^{-\theta}$ is concave near 1 for some $\theta>0$. In particular, we have that \eqref{CR_condition} and \eqref{equivalent_CR} hold for some $K\geq 0$ (and $\underline{\gamma}\geq 1+\theta$). 

Then, there exists a constant $C$, depending only on $n$, $\theta$ and $K$, such that
\begin{equation*}
     \norm{f(u)}_{W^{1,2}(B_{1/2})}\leq C \norm{u}_{L^1(B_{3/4})}.
\end{equation*}
\end{lemma}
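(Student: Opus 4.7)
My plan is to apply Lemma \ref{basic_lemma_f(u)} with the simplest possible test function, $\eta = \zeta$ for a standard cut-off, and then handle the resulting right-hand side using Proposition \ref{proposition_first_proof} together with standard interior elliptic regularity. Writing $v := f(u)$, I would first fix $\zeta \in C^\infty_c(B_{5/8})$ with $\zeta \equiv 1$ on $B_{1/2}$ and $\abs{\nabla \zeta}$ universally bounded. Substituting $\eta = \zeta$ into \eqref{stability_with_fu_and_eta} yields at once
\[
\int_{B_{1/2}} \abs{\nabla v}^2\,\dd x \leq C\left(\norm{v}_{L^2(B_{5/8})}^2 + \norm{\nabla u}_{L^2(B_{5/8})}^2\right),
\]
with $C$ depending only on $n$, $\theta$, and $K$. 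It then remains to control each term on the right by $\norm{u}_{L^1(B_{3/4})}$.

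For the $L^2$ bound on $v$, I would invoke Proposition \ref{proposition_first_proof}, whose exponent $\overline{p}$ always exceeds $2$: indeed $\overline{p} = (2+\beta)\cdot 2n/(n-2) > 4$ when $n \geq 3$, and $\overline{p}$ is arbitrary when $n \leq 2$. The proposition as stated bounds $v$ on $B_{1/2}$ in terms of $u$ on $B_{1}$; I would upgrade it to an estimate on $B_{5/8}$ in terms of $B_{3/4}$ via the standard rescaling $u_{r,y}(x) = u(y+rx)$, which leaves the Crandall--Rabinowitz constant unchanged since $r \leq 1$ forces $r^4 K \leq K$, combined with a finite covering of $B_{5/8}$ by small balls $B_{r/2}(y_i)$ whose doublings $B_r(y_i)$ lie inside $B_{3/4}$. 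Hölder's inequality then produces $\norm{v}_{L^2(B_{5/8})} \leq C\norm{u}_{L^1(B_{3/4})}$.

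For the $L^2$ bound on $\nabla u$, I would apply standard interior $W^{2,2}$ elliptic regularity with an $L^1$ lower-order term to $-\Delta u = v$, giving
\[
\norm{\nabla u}_{L^2(B_{5/8})} \leq C\bigl(\norm{v}_{L^2(B_{11/16})} + \norm{u}_{L^1(B_{11/16})}\bigr) \leq C\norm{u}_{L^1(B_{3/4})},
\]
where the last inequality comes from the previous step applied on the slightly larger ball $B_{11/16}$. Plugging both estimates back into the first displayed inequality, and noting that $\norm{v}_{L^2(B_{1/2})} \leq \norm{v}_{L^2(B_{5/8})}$ already controls the $L^2$ part of the $W^{1,2}$ norm, one concludes $\norm{v}_{W^{1,2}(B_{1/2})} \leq C\norm{u}_{L^1(B_{3/4})}$. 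The only real subtlety is that Lemma \ref{basic_lemma_f(u)} by itself cannot produce an $L^2$ bound on $v$, since $\norm{v}_{L^2}$ already sits on the right-hand side of \eqref{stability_with_fu_and_eta}; it is the higher integrability supplied by Proposition \ref{proposition_first_proof} that closes the circle.
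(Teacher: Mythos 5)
Your proof is logically correct but takes a genuinely different, and heavier, route than the paper's. Both proofs begin identically: plug $\eta=\zeta$ into \eqref{stability_with_fu_and_eta}, getting a bound for $\int\abs{\nabla v}^2\zeta^2$ in terms of $\norm{v}_{L^2}$ and $\norm{\nabla u}_{L^2}$ on a slightly larger ball, and use interior $W^{2,2}$ estimates for $-\Delta u=v$ to reduce the latter to $\norm{v}_{L^2}+\norm{u}_{L^1}$. The divergence is in how the two proofs close the loop on the remaining term $\norm{v}_{L^2}$. You outsource it to Proposition~\ref{proposition_first_proof} (plus rescaling and a finite covering), which gives $\norm{v}_{L^{\overline{p}}}$ with $\overline{p}>4>2$ directly in terms of $\norm{u}_{L^1}$. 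The paper instead uses only the elementary interpolation
\[
\norm{v}_{L^2(B_{3/4})}\leq \varepsilon\norm{\nabla v}_{L^2(B_1)}+C_\varepsilon\norm{v}_{L^1(B_{3/4})},
\]
controls $\norm{v}_{L^1}=\norm{\Delta u}_{L^1}$ by $\norm{u}_{L^1}$ via the sign of $\Delta u$ as in \eqref{control_L1_deltau_by_L1_u}, and then absorbs the small $\varepsilon\norm{\nabla v}_{L^2}$ term using Simon's lemma after rescaling. This keeps Lemma~\ref{lemma_estimate_v2} self-contained within Section~\ref{interior_powers_of_mod_x}.

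There is no circularity in your argument, since Proposition~\ref{proposition_first_proof} does not rely on Lemma~\ref{lemma_estimate_v2}; your rescaling-and-covering upgrade of the proposition to $B_{5/8}\hookrightarrow B_{3/4}$ is standard and sound. But you should be aware that by invoking Proposition~\ref{proposition_first_proof} you are importing the whole machinery of Section~\ref{interior_through_powers} (the nonlinear test function $\eta=v^{1+\beta}\zeta$, several interpolations and Simon's lemma) into the lemma that the paper wants to serve as the starting point of an \emph{independent} second proof of Theorem~\ref{theorem_interior}. You use a much stronger input ($L^{\overline{p}}$ with $\overline{p}>4$) than the $L^2$ conclusion requires, and in doing so you undercut the purpose of the second proof. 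If you were writing this in a paper with both proofs present, your version would be formally valid but would make Section~\ref{interior_powers_of_mod_x} depend on Section~\ref{interior_through_powers}, which the authors deliberately avoid. It is worth internalizing the paper's lighter mechanism, which accomplishes the same thing with only the basic inequality of Lemma~\ref{basic_lemma_f(u)}, the trivial $L^1$ control of $-\Delta u$, interpolation, and Simon's lemma.
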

\begin{proof}
We denote $v=f(u)$. Note that $v$ is a nonnegative $C^2$ function, since both $u$ and $f$ are~$C^2$. Let $\zeta\in C^\infty_c(B_{1})$ be a cut-off function satisfying $\supp(\zeta)\subset B_{1/2}$, $0\leq \zeta \leq 1$, and $\zeta=1$ in $B_{1/4}$. We choose $\eta=\zeta$ in \eqref{stability_with_fu_and_eta}. We get
\begin{equation}\label{first_ineq_in_estimate_v2}
     \int_{B_{1/4}} \abs{\nabla v}^2 \dd x \leq C\left(\int_{B_{1/2}} v^2\dd x  + \int_{B_{1/2}} \abs{\nabla u}^2\dd x\right)
\end{equation}
for some constant $C$ depending only on $\theta$ and $K$.

On the one hand, by standard elliptic estimates, we have
\begin{align*}
\norm{\nabla u}_{L^2(B_{1/2})} &\leq \norm{u}_{W^{2,2}(B_{1/2})} \\ 
&\leq C_n\left(\norm{\Delta u}_{L^2(B_{3/4})}+\norm{u}_{L^1(B_{3/4})}\right)\\ 
&=C_n\left(\norm{v}_{L^2(B_{3/4})}+\norm{u}_{L^1(B_{3/4})}\right)
\end{align*}
for some constant $C_n$ depending only on $n$. This, together with \eqref{first_ineq_in_estimate_v2}, yields
\begin{equation}\label{sec3lem1boundgradv}
     \norm{\nabla v}_{L^2(B_{1/4})}\leq C\left(\norm{v}_{L^2(B_{3/4})} + \norm{u}_{L^1(B_{3/4})}\right).
\end{equation}

On the other hand, by interpolation, for every $\varepsilon>0$ we have
\begin{align}
    \norm{v}_{L^2(B_{3/4})}&\leq \varepsilon \norm{\nabla v}_{L^2(B_{3/4})}+C_{\varepsilon}\norm{v}_{L^1(B_{3/4})}\nonumber\\
    &= \varepsilon \norm{\nabla v}_{L^2(B_{3/4})}+C_{\varepsilon}\norm{\Delta u}_{L^1(B_{3/4})}\nonumber\\
    &\leq \varepsilon \norm{\nabla v}_{L^2(B_{1})}+C_{\varepsilon}\norm{u}_{L^1(B_{1})}\label{interpolation_inequality_for_v}
\end{align}
for some constant $C_{\varepsilon}$ depending only on $\varepsilon$ and $n$. In the last inequality we have used the fact that $\Delta u \leq 0$ to control the $L^1$ norm of $\Delta u$ in $B_{3/4}$ by the $L^1$ norm of $u$ in $B_1$, as in \eqref{control_L1_deltau_by_L1_u}. Putting together \eqref{interpolation_inequality_for_v} and \eqref{sec3lem1boundgradv} we obtain
\begin{equation}\label{estimate_before_simon_v2}
    \norm{\nabla v}_{L^2(B_{1/4})} \leq \varepsilon \norm{\nabla v}_{L^2(B_1)} +C_\varepsilon\norm{u}_{L^1(B_{1})}
\end{equation}
for some constant $C_\varepsilon$ depending only on $\varepsilon$, $n$, $\theta$, and $K$.

Now, for every $B_r(y)\subset B_1$, let us consider the function $u_{r,y}(x):=u(y+rx)$. This function is a stable solution of the equation
\begin{equation*}
    -\Delta u_{r,y} = r^2 f(u_{r,y}) \quad \text{ in }B_1.
\end{equation*}
Notice that since the nonlinearity has been scaled by a factor $r^2$, the constant $K$ in \eqref{equivalent_CR} is scaled by a factor $r^4$. However, since $r\leq 1$, then $r^4K\leq K$ and thus we can apply \eqref{estimate_before_simon_v2} to $v_{r,y}:=-\Delta u_{r,y}=r^2v(rx+y)$ and $u_{r,y}$. This gives
\begin{align*}
    r^2\norm{{\nabla v(r\cdot +y)}}_{L^2(B_{1/4})}\leq \varepsilon r^2 \norm{\nabla v(r\cdot +y)}_{L^2(B_1)}+ C_{\varepsilon}  \norm{u_{r,y}}_{L^1(B_{1})}.
\end{align*}
Hence,
\begin{equation*}
    r^{(6-n)/2} \norm{\nabla v}_{L^2(B_{r/4}(y))}  \leq \varepsilon r^{(6-n)/2} \norm{\nabla v}_{L^2(B_{r}(y))} + C_{\varepsilon} r^{-n} \norm{u}_{L^1(B_r(y))}.
\end{equation*}
Multiplying through by $r^{n}$ we obtain
\begin{align*}
    r^{(6+n)/2} \norm{\nabla v}_{L^2(B_{r/4}(y))}  &\leq \varepsilon r^{(6+n)/2} \norm{\nabla v}_{L^2(B_{r}(y))} +C_{\varepsilon}  \norm{u}_{L^1(B_1)}.
\end{align*}
Hence, by Simon's lemma \cite{Simon1997} (Lemma \ref{Simon_lemma} in the previous section) we deduce that
\begin{equation}\label{estimate_grad_v_second_proof}
    \norm{\nabla v}_{L^2(B_{1/2})} \leq C  \norm{u}_{L^1(B_1)},
\end{equation}
where $C$ depends only on $n$, $\theta$, and $K$.

Combining this with an interpolation inequality and the fact that, since $v=-\Delta u \geq 0$, using an argument identical to the one in \eqref{control_L1_deltau_by_L1_u}, we can control the $L^1$ norm of $v$ in $B_{1/2}$ by the $L^1$ norm of $u$ in $B_1$, we get
\begin{equation}\label{estimate_v_second_proof}
    \norm{v}_{L^2(B_{1/2})} \leq C\left( \norm{\nabla v}_{L^2(B_{1/2})}+\norm{v}_{L^1(B_{1/2})}\right)\leq C \norm{u}_{L^1(B_1)}.
\end{equation}
The claim of the lemma follows now from \eqref{estimate_grad_v_second_proof} and \eqref{estimate_v_second_proof} by a standard scaling and covering argument.
\end{proof}

In the proof of Proposition \ref{proposition_v2_convolution} below we will use the following well-known weighted Hardy inequality (see, for instance, \cite{HY}). We include a proof here for completeness.
\begin{lemma}\label{lemma_hardy}
The inequality
\begin{equation*}
    \frac{(n+ a - 2)^2}{4}\int_{\mathbb{R}^n} \varphi^2 \abs{x}^{ a - 2 } \dd x \leq \int_{\mathbb{R}^n} \varphi_r^2 \abs{x}^a \dd x.
\end{equation*}
holds for all $a>-n$ and all $\varphi \in C^1_c(\mathbb{R}^n)$. Here, $\varphi_r = x/\abs{x} \cdot \nabla \varphi$.
\end{lemma}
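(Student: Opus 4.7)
The plan is to use the standard integration-by-parts trick for weighted Hardy inequalities, based on the divergence identity for the radial vector field $x|x|^{a-2}$. The key pointwise computation, valid on $\mathbb{R}^n\setminus\{0\}$, is
\[
\div\bigl(x\,|x|^{a-2}\bigr)=(n+a-2)\,|x|^{a-2}.
\]
I would first assume, without loss of generality, that the right-hand side $\int_{\mathbb{R}^n}\varphi_r^2|x|^a\,dx$ is finite (otherwise the inequality is trivial), and I would also assume $n+a-2\neq 0$ since the case $n+a-2=0$ reduces the desired inequality to $0\leq\int\varphi_r^2|x|^a\,dx$, which is obvious.

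Next, I would multiply the identity above by $\varphi^2$ and integrate, using integration by parts. Writing $\nabla\varphi\cdot x=|x|\varphi_r$, this gives
\[
(n+a-2)\int_{\mathbb{R}^n}\varphi^2\,|x|^{a-2}\,dx
=-2\int_{\mathbb{R}^n}\varphi\,\varphi_r\,|x|^{a-1}\,dx.
\]
A Cauchy–Schwarz estimate on the right-hand side, splitting $|x|^{a-1}=|x|^{(a-2)/2}\cdot|x|^{a/2}$, then yields
\[
|n+a-2|\int_{\mathbb{R}^n}\varphi^2\,|x|^{a-2}\,dx
\leq 2\Bigl(\int_{\mathbb{R}^n}\varphi^2\,|x|^{a-2}\,dx\Bigr)^{1/2}\Bigl(\int_{\mathbb{R}^n}\varphi_r^2\,|x|^a\,dx\Bigr)^{1/2},
\]
and after squaring and dividing through by the common factor (assuming it is finite and nonzero; otherwise the conclusion is immediate or trivially true), one obtains the claimed inequality with the sharp constant $(n+a-2)^2/4$.

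The one nontrivial point, and the main thing to justify carefully, is the integration by parts near the origin, where the weight $|x|^{a-2}$ may be singular. I would handle this by inserting a radial cut-off $\chi_\varepsilon(x)$ equal to $0$ on $B_\varepsilon$ and $1$ outside $B_{2\varepsilon}$ with $|\nabla\chi_\varepsilon|\leq C/\varepsilon$, applying the identity to $\chi_\varepsilon\varphi^2$, and letting $\varepsilon\downarrow 0$. The extra boundary term produced is bounded, using the bound on $|\nabla\chi_\varepsilon|$ and $|\varphi|\leq\|\varphi\|_{L^\infty}$, by a multiple of $\varepsilon^{-1}\int_{B_{2\varepsilon}\setminus B_\varepsilon}|x|^{a-1}\,dx\lesssim \varepsilon^{n+a-2}$, which tends to $0$ as $\varepsilon\downarrow 0$ precisely when $n+a-2>0$; when $n+a-2<0$ the integral $\int\varphi^2|x|^{a-2}\,dx$ against a compactly supported $\varphi$ diverges unless $\varphi$ vanishes sufficiently fast at the origin, in which case the boundary term still vanishes by the same estimate applied to $\varphi$ in place of $\|\varphi\|_{L^\infty}$. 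Either way the cut-off can be removed in the limit, and the integration-by-parts identity, together with the subsequent Cauchy–Schwarz application, yields the stated inequality for all $\varphi\in C^1_c(\mathbb{R}^n)$ and all $a>-n$.
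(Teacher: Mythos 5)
Your approach is essentially the same as the paper's: both are the classical integration-by-parts plus Cauchy--Schwarz proof of the weighted Hardy inequality. The paper works in polar coordinates and integrates by parts in $r$ against the primitive $r^{n+a-2}/(n+a-2)$, then integrates over $\mathbb{S}^{n-1}$; you work directly in $\mathbb{R}^n$ against the divergence identity $\div(x\,|x|^{a-2})=(n+a-2)|x|^{a-2}$. These are the same computation in two guises, and your cut-off near the origin is the analogue of the paper's remark about truncating negative powers of $r$. The part of your argument that does not hold up is the final paragraph addressing $n+a-2<0$: you observe (correctly) that for such $a$ the integral $\int\varphi^2|x|^{a-2}\,dx$ can diverge when $\varphi(0)\neq0$, but you then assert that the inequality nonetheless follows. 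It does not. For $-n<a<2-n$ the constant $(n+a-2)^2/4$ is strictly positive, the left-hand side is $+\infty$ whenever $\varphi(0)\neq0$, and the right-hand side is finite for every $\varphi\in C^1_c$ (since $a>-n$), so the stated inequality is simply false in that subrange. This is a small imprecision in the lemma as written (the natural hypothesis is $a\geq 2-n$, or $n+a-2\geq0$), and the paper's own polar-coordinate proof has the same implicit restriction since the boundary term at $r=0$ only vanishes when $n+a-2>0$; it is harmless there because the lemma is applied only with $a=4-n$, i.e.\ $n+a-2=2>0$. Your proof is correct precisely on this range, so the gap is one of scope rather than of the core argument: you should either restrict to $n+a-2\geq0$, or, if you claim more, you need an argument that actually works — and no such argument exists, because the statement is false for $-n<a<2-n$.
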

\begin{proof}
We work in polar coordinates. Choose any $\sigma\in \mathbb{S}^{n-1}$ and write $r=\abs{x}$. Using integration by parts and Cauchy-Schwarz's inequality (to be precise, we should approximate negative powers of $r$ by Lipschitz functions, truncating them near the origin), we get
\begin{align*}
    \int_0^\infty \varphi(r \sigma)^2 & r^{ a - 2 } r^{n-1} \dd r = \int_0^\infty \varphi(r \sigma)^2 \derivative{r}\left(\frac{r^{n+a-2}}{n+a-2}\right) \dd r \\ 
    &= - \frac{1}{n+a-2}\int_{0}^\infty 2 \varphi(r \sigma) \varphi_r(r \sigma) r^{n+a-2} \dd r\\ 
    &=- \frac{2}{n+a-2}\int_{0}^\infty  \varphi(r \sigma) r^{(n+a-3)/2} \varphi_r(r \sigma) r^{(n+a-1)/2} \dd r \\ 
    &\leq \frac{2}{n+a-2}\left(\int_{0}^\infty  \varphi(r \sigma)^2  r^{n+a-3} \dd r\right)^{1/2} \left(\int_0^\infty \varphi_r(r \sigma)^2 r^{n+a-1}\dd r\right)^{1/2} .
\end{align*}
This gives
\begin{equation*}
    \int_0^\infty \varphi(r \sigma)^2 r^{ a - 2 } r^{n-1} \dd r \leq \left(\frac{2}{n+a-2}\right)^2 \int_0^\infty \varphi_r(r \sigma)^2 r^{a}r^{n-1}\dd r.
\end{equation*}
Integrating over $\sigma \in \mathbb{S}^{n-1}$ yields the desired result.
\end{proof}

The following proposition is the key ingredient in the second proof of Theorem \ref{theorem_interior}. From it, we obtain a bound for the convolution of $f(u)^2$ with the fundamental solution of the Laplacian in $\mathbb{R}^n$ for $3\leq n \leq 6$.
\begin{proposition}\label{proposition_v2_convolution}
Let $u\in C^2\left({B_1}\right)$ be a nonnegative stable solution of $-\Delta u = f(u)$ in $B_{1}\subset \mathbb{R}^n$ for some function $f\in C^2([0,1))$ satisfying $f\geq 0$. Assume that $f^{-\theta}$ is concave near 1 for some $\theta>0$. In particular, we have that \eqref{CR_condition} and \eqref{equivalent_CR} hold for some $K\geq 0$ (and $\underline{\gamma}\geq 1+\theta$). 

Then, for $3\leq n\leq 6$, we have
\begin{equation}\label{proposition_estimate_v_with_power}
    \int_{B_{3/16}} \left(f(u)^2+\abs{x}^{2}\abs{\nabla f(u)}^2\right)\abs{x}^{2-n}  \dd x \leq C\norm{u}^2_{L^1(B_{3/4})},
\end{equation}
where $C$ is a constant depending only on $n$, $\theta$, and $K$.

\end{proposition}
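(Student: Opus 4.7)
The strategy is to feed $\eta = |x|^{(4-n)/2}\zeta$ into the basic stability inequality \eqref{stability_with_fu_and_eta} of Lemma \ref{basic_lemma_f(u)}, where $\zeta\in C_c^\infty(B_{1/2})$ is a cut-off with $0\leq \zeta\leq 1$ and $\zeta\equiv 1$ on $B_{3/16}$. Since $|x|^{(4-n)/2}$ is singular at the origin when $n\geq 5$, I must first verify admissibility by approximating with $\eta_\varepsilon=(\max\{|x|,\varepsilon\})^{(4-n)/2}\zeta$, which is Lipschitz with zero boundary values, and passing to the limit via dominated convergence (all relevant integrals remain finite at each fixed $\varepsilon$, and $u\in C^2(B_1)$ suffices on compact subsets). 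A direct computation yields
\begin{equation*}
|\nabla \eta|^2 = \tfrac{(n-4)^2}{4}|x|^{2-n}\zeta^2 + (4-n)|x|^{3-n}(\hat x\cdot\nabla\zeta)\zeta + |x|^{4-n}|\nabla\zeta|^2,
\end{equation*}
so \eqref{stability_with_fu_and_eta} becomes, with $A:=\int|\nabla f(u)|^2|x|^{4-n}\zeta^2\,dx$ and $B:=\int f(u)^2|x|^{2-n}\zeta^2\,dx$,
\begin{equation*}
(1+\theta)\,A \;\leq\; \tfrac{(n-4)^2}{4}\,B + \mathcal{R},
\end{equation*}
where $\mathcal{R}$ collects the $|x|^{3-n}$ cross term, the cut-off term $\int f(u)^2|x|^{4-n}|\nabla\zeta|^2\,dx$, and the $K$-term $K\int|\nabla u|^2|x|^{4-n}\zeta^2\,dx$.

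Next I couple this with the weighted Hardy inequality of Lemma \ref{lemma_hardy} applied to $\varphi=f(u)\zeta\in C_c^1(\mathbb{R}^n)$ with exponent $a=4-n$. The key coefficient computation is $(n+a-2)^2/4=1$. Expanding $((f(u)\zeta)_r)^2\leq(1+\delta)(f(u)_r)^2\zeta^2+C_\delta f(u)^2\zeta_r^2$ by Young and using $(f(u)_r)^2\leq|\nabla f(u)|^2$, I get
\begin{equation*}
B \;\leq\; (1+\delta)\,A + C_\delta\!\int f(u)^2 \zeta_r^2|x|^{4-n}\,dx.
\end{equation*}
Substituting back produces $(1+\theta)A\leq \tfrac{(n-4)^2}{4}(1+\delta)A+\widetilde{\mathcal{R}}$. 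The crucial observation is that for $3\leq n\leq 6$ we have $\tfrac{(n-4)^2}{4}\in\{\tfrac14,0,\tfrac14,1\}\leq 1<1+\theta$, so by choosing $\delta>0$ small depending only on $\theta$ the $A$-term can be absorbed on the left. This sharp coefficient match at $n=6$ is precisely why the argument reaches the optimal dimension and why $\theta>0$ (i.e.\ $\underline\gamma>1$) is needed.

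It then remains to show $\widetilde{\mathcal{R}}\leq C\|u\|_{L^1(B_{3/4})}^2$. The cut-off pieces are supported on $\{\nabla\zeta\neq 0\}\subset B_{1/2}\setminus B_{3/16}$, where $|x|^{4-n}$ is uniformly bounded, so they are controlled by $\|f(u)\|_{L^2(B_{1/2})}^2$, and hence by $\|u\|_{L^1(B_{3/4})}^2$ via Lemma \ref{lemma_estimate_v2}. The main obstacle is the $K$-term for $n=5,6$, where the weight $|x|^{4-n}$ is singular at the origin. I handle it by applying the classical Hardy inequality (valid since $n\geq 3$) componentwise to $u_i\zeta$, giving
\begin{equation*}
\int (u_i)^2|x|^{-2}\zeta^2\,dx \;\leq\; C\bigl(\|D^2 u\|_{L^2(B_{1/2})}^2+\|\nabla u\|_{L^2(B_{1/2})}^2\bigr),
\end{equation*}
and then invoking interior $W^{2,2}$ estimates together with Lemma \ref{lemma_estimate_v2} to bound $\|u\|_{W^{2,2}(B_{1/2})}\leq C(\|f(u)\|_{L^2(B_{3/4})}+\|u\|_{L^1(B_{3/4})})\leq C\|u\|_{L^1(B_{3/4})}$. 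Combined with the trivial annulus bound $|x|^{4-n}\leq C$ away from the origin, this yields $\int|\nabla u|^2|x|^{4-n}\zeta^2\,dx\leq C\|u\|_{L^1(B_{3/4})}^2$. Putting everything together gives $A\leq C\|u\|_{L^1(B_{3/4})}^2$, and the Hardy step then delivers the corresponding bound for $B$. Since $\zeta\equiv 1$ on $B_{3/16}$, both halves of \eqref{proposition_estimate_v_with_power} follow.
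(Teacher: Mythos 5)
Your proof is correct and follows the same main mechanism as the paper's: substitute $\eta=\abs{x}^{(4-n)/2}\zeta$ into \eqref{stability_with_fu_and_eta}, couple with the weighted Hardy inequality (Lemma~\ref{lemma_hardy}) applied to $f(u)\zeta$ with exponent $a=4-n$, and absorb using the inequality $(n-4)^2/4\leq 1<1+\theta$ for $3\leq n\leq 6$. The paper presents this with a free parameter $\alpha\in(1,n/2)$ chosen at the end to be $(n-2)/2$ and with adjustable radii $\rho,\lambda$ before applying Lemma~\ref{lemma_estimate_v2}, but the underlying computation and the role of $\theta>0$ at the optimal dimension $n=6$ are identical.

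The one genuinely different sub-step is your treatment of the $K$-term $\int\abs{\nabla u}^2\abs{x}^{4-n}\zeta^2\,\mathrm{d}x$. The paper first upgrades Lemma~\ref{lemma_estimate_v2} via Sobolev to get $\norm{f(u)}_{L^3(B_{1/2})}\leq C\norm{u}_{L^1(B_{3/4})}$, then uses $W^{2,3}\to W^{1,q}$ elliptic estimates to obtain $\nabla u\in L^{2n/(n-3)}$, and finally H\"older with $\abs{x}^{4-n}\in L^{n/3}(B_{1/4})$ (which requires $n<7$). You instead apply the classical Hardy inequality componentwise to $u_i\zeta$ together with interior $W^{2,2}$ bounds, using $\abs{x}^{4-n}\leq\abs{x}^{-2}$ on $B_{1/2}$ for $n=5,6$ and boundedness for $n=3,4$. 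This is a cleaner route that avoids the $L^q$ gradient estimate, while relying on exactly the same constraint $n\leq 6$ (only indirectly, through the overall absorption). One small bookkeeping slip: you write $\norm{f(u)}_{L^2(B_{3/4})}\leq C\norm{u}_{L^1(B_{3/4})}$, but Lemma~\ref{lemma_estimate_v2} controls $\norm{f(u)}_{W^{1,2}(B_{1/2})}$, not on $B_{3/4}$. This is easily fixed by taking $\supp\zeta\subset B_{1/4}$ (which is consistent with the target ball $B_{3/16}$) so that the interior $W^{2,2}$ estimate on $B_{1/4}$ only needs $\norm{f(u)}_{L^2(B_{1/2})}$, which Lemma~\ref{lemma_estimate_v2} provides directly; alternatively, rescale and cover as the paper does in its proof of Lemma~\ref{lemma_estimate_v2}.
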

\begin{proof}
We write $v=f(u)$ as usual. Note that $v$ is a nonnegative $C^2$ function, since both $u$ and $f$ are $C^2$. Let $\rho$ and $\lambda$ belong to $(0,1)$, to be chosen later. Let $\zeta\in C^\infty_c(B_{1})$ be a cut off function satisfying $\supp(\zeta)\subset B_{\rho}$, $0\leq \zeta \leq 1$, $\zeta=1$ in $B_{\lambda \rho}$, and $\abs{\nabla \zeta} \leq C/(\rho - \lambda \rho)$. 

We start from \eqref{stability_with_fu_and_eta}. We would like to make the choice $\eta = \abs{x}^{1- \alpha} \zeta$ in \eqref{stability_with_fu_and_eta}, with $\alpha\in (0, n/2)$ to be chosen later. However, since $\eta$ is not Lipschitz at the origin for any $\alpha > 0$, we must approximate it by the Lipschitz functions $\eta_{\varepsilon}:= \min\{\abs{x}^{1- \alpha}, \varepsilon^{ 1- \alpha}\} \zeta$ for $\varepsilon\in (0,1)$. Notice that $\eta = \eta_{\varepsilon}$ in $B_{1}\setminus B_{\varepsilon}$, and that $\eta_{\varepsilon}\to \eta$ and $\nabla \eta_{\varepsilon}\to \nabla \eta$ almost everywhere in $B_1$ as $\varepsilon\to 0$. Applying the dominated convergence theorem, since $u\in C^2(B_1)$, we deduce that \eqref{stability_with_fu_and_eta} also holds with $\eta = \abs{x}^{1- \alpha} \zeta$.

Now, we compute
\begin{align*}
    \abs{\nabla \eta}^2 &= \abs{\nabla\left( \abs{x}^{1- \alpha} \zeta\right)}^2 = \abs{ (1- \alpha) \abs{x}^{-\alpha}\frac{x}{\abs{x}} \zeta+\abs{x}^{1- \alpha} \nabla \zeta}^2 \\[4 pt]
    &= (1- \alpha)^2 \abs{x}^{- 2 \alpha}\zeta^2+\abs{x}^{2- 2 \alpha} \abs{\nabla \zeta}^2 + (1- \alpha) \abs{x}^{-2 \alpha} (x\cdot \nabla \zeta^2) \\ 
    &\leq (1- \alpha)^2 \abs{x}^{- 2 \alpha}\zeta^2+C_{\lambda, \alpha} \rho^{-2 \alpha}
\end{align*}
for some constant $C_{\lambda, \alpha}>0$ depending only on $\lambda$ and $\alpha$. Substituting in \eqref{stability_with_fu_and_eta} we get
\begin{multline}\label{inequality_with_power_before_hardy}
     (1+\theta) \int_{B_{\rho}} \abs{\nabla v}^2 \abs{x}^{2- 2 \alpha} \zeta^2 \dd x \leq (1- \alpha)^2\int_{B_{\rho}} v^2  \abs{x}^{- 2 \alpha}\dd x\\
     +C_{\lambda, \alpha} \left(\rho^{-2 \alpha}\int_{B_{\rho}} v^2\dd x + \int_{B_{\rho}} \abs{\nabla u}^2\abs{x}^{2-2 \alpha} \dd x\right)      
\end{multline}
for some constant $C_{\lambda, \alpha}>0$ depending only on $\lambda$, $\alpha$, and $K$.

We would like to apply Lemma \ref{lemma_hardy} to the left-hand side in the previous expression. Hence we write
\begin{equation*}
    \abs{\nabla v}^2\zeta^2 = \abs{\nabla (v \zeta)}^2 - v^2 \abs{\nabla \zeta}^2 - 2  v \nabla v \cdot \zeta\nabla \zeta.
\end{equation*}
Using this, \eqref{inequality_with_power_before_hardy}, and, later on, Cauchy-Schwarz's inequality, we obtain
\begin{align*}
    (1+\theta) \int_{B_{\rho}}  & \abs{\nabla (v \zeta)}^2  \abs{x}^{2- 2 \alpha}\dd x  \leq (1+\theta) \int_{B_{\rho}\setminus B_{\lambda\rho}} \left( v^2 \abs{\nabla \zeta}^2 +2 v \nabla v \cdot \zeta \nabla \zeta\right) \abs{x}^{2-2 \alpha}\dd x \nonumber \\ 
    &+(1- \alpha)^2\int_{B_{\rho}} v^2  \abs{x}^{- 2 \alpha}\dd x +C_{\lambda, \alpha} \left(\rho^{-2 \alpha}\int_{B_{\rho}} v^2\dd x + \int_{B_{\rho}}  \abs{\nabla u}^2 \abs{x}^{2-2 \alpha}\dd x\right)\\ 
    &\leq \begin{multlined}[t][0.7\displaywidth]
    (1- \alpha)^2\int_{B_{\rho}} v^2  \abs{x}^{- 2 \alpha}\dd x\nonumber \\
    +C_{\lambda, \alpha} \left(\rho^{-2 \alpha}\int_{B_{\rho}} \left(v^2+\rho^2\abs{\nabla v}^2\right)\dd x + \int_{B_{\rho}}  \abs{\nabla u}^2\abs{x}^{2-2 \alpha}\dd x \right)
    \end{multlined} 
\end{align*}
for some constant $C_{\lambda, \alpha}$ depending only on $\lambda$, $\alpha$, and $K$.

We now apply Lemma \ref{lemma_hardy} to the left-hand side with $a = 2- 2 \alpha$; note that we need to have $a=2-2 \alpha > - n$, i.e.,
\begin{equation}\label{upper_bound_alpha}
    \alpha < \frac{n+2}{2}.
\end{equation}
We deduce that
\begin{align*}
     (1+\theta) \frac{(n-2 \alpha)^2}{4}&\int_{B_{\lambda \rho}} v^2\abs{x}^{-2\alpha} \dd x \leq (1- \alpha)^2 \int_{B_{\rho}} v^2 \abs{x}^{-2 \alpha} \dd x \nonumber\\
     &+ C_{\lambda, \alpha} \left(\rho^{-2 \alpha} \int_{B_{\rho}} \left(v^2+\rho^2 \abs{\nabla v}^2\right)\dd x  + \int_{B_\rho} \abs{\nabla u}^2\abs{x}^{2-2 \alpha}\dd x\right).
\end{align*}

We choose $\alpha = (n-2)/2$, so  that \eqref{upper_bound_alpha} is satisfied and also
\begin{equation*}
     (1+\theta) \frac{(n-2 \alpha)^2}{4}= 1+\theta > \frac{(n-4)^2}{4}=(1- \alpha)^2,
\end{equation*}
since $3\leq n\leq 6$. We deduce that
\begin{equation}\label{sability_inequality_after_hardy}
    \int_{B_{\lambda \rho}} v^2\abs{x}^{2-n} \dd x \leq C_{\lambda} \left(\rho^{2-n}\int_{B_{\rho}}\left(v^2+\rho^2 \abs{\nabla v}^2\right)\dd x + \int_{B_\rho}\abs{\nabla u}^2\abs{x}^{4-n} \dd x\right)
\end{equation}
for some constant $C_{\lambda}>0$ depending on $\lambda$, $n$, $\theta$, and $K$.

Combining \eqref{sability_inequality_after_hardy} and \eqref{inequality_with_power_before_hardy} (with $\rho$ replaced by $\lambda \rho$) we get
\begin{equation*}
    \int_{B_{\lambda^2\rho}} \abs{\nabla v}^2\abs{x}^{4-n} \dd x \leq C_{\lambda} \left(\rho^{2-n}\int_{B_{\rho}} \left(v^2+\rho^2 \abs{\nabla v}^2\right)\dd x  +  \int_{B_\rho} \abs{\nabla u}^2\abs{x}^{4-n} \dd x \right),
\end{equation*}
which, together with \eqref{sability_inequality_after_hardy}, yields
\begin{align}
    \int_{B_{\lambda^2\rho}}&\left(v^2+\abs{x}^{2}\abs{\nabla v}^2\right)  \abs{x}^{2-n}\dd x \nonumber \\ &\leq C_{\lambda} \left(\rho^{2-n}\int_{B_{\rho}} \left(v^2+\rho^2 \abs{\nabla v}^2\right) \dd x +  \int_{B_\rho} \abs{\nabla u}^2 \abs{x}^{4-n}\dd x \right).\label{stability_inequality_after_undoing_hardy}
\end{align}

We now set $\lambda=\sqrt{3}/2$ and $\rho=1/4$ in \eqref{stability_inequality_after_undoing_hardy}. We obtain, using also Lemma \ref{lemma_estimate_v2},
\begin{align}
    \int_{B_{3/16}}\left(v^2+\abs{x}^{2}\abs{\nabla v}^2\right) \abs{x}^{2-n} \dd x &\leq C \left(\norm{v}_{W^{1,2}(B_{1/4})}^2 + \int_{B_{1/4}}\abs{\nabla u}^2 \abs{x}^{4-n} \dd x \right)\nonumber \\ 
    &\leq C \left(\norm{u}_{L^1(B_{3/4})}^2 + \int_{B_{1/4}}\abs{\nabla u}^2 \abs{x}^{4-n} \dd x  \right)\label{bound_on_v_grad_v_with_weights}
\end{align}
for some constant $C$ depending only on $n$, $\theta$, and $K$.

We now need to deal with the term having $\abs{\nabla u}^2$ in the right hand side. By Lemma \ref{lemma_estimate_v2} and Sobolev's inequality we have
\begin{equation}\label{sobolev_bound_lap_u}
    \norm{v}_{L^p(B_{1/2})} \leq C \norm{u}_{L^1(B_{3/4})}
\end{equation}
for every $p\leq 2n/(n-2)$. Notice that, since $n\leq 6$, we can always take $p=3$. The constant $C$ depends only on $n$, $\theta$, and $K$ (as all constants $C$ in the rest of the proof). Using \eqref{sobolev_bound_lap_u} and classical elliptic estimates we get that, for $1\leq q\leq 3n/(n-3)$ (for every $q\in [1,+\infty)$ if $n=3$),
\begin{align}
    \norm{\nabla u}_{L^q(B_{1/4})}&\leq C\norm{u}_{W^{2,3}(B_{1/4})}\nonumber\\
    &\leq C\left(\norm{\Delta u}_{L^{3}(B_{1/2})}+\norm{u}_{L^{1}(B_{1/2})}\right)\nonumber\\ 
    &= C\left(\norm{v}_{L^{3}(B_{1/2})}+\norm{u}_{L^{1}(B_{1/2})}\right)\nonumber\\
    &\leq C\norm{u}_{L^{1}(B_{3/4})}. \label{bound_of_Lq_of_gradu}
\end{align}
We choose $q=2n/(n-5/2) = 4n/(2n-5)$, which we can do since $n\geq 3$. Then, by Hölder's inequality and \eqref{bound_of_Lq_of_gradu} we have
\begin{equation}\label{bound_on_weighted_gradu}
    \int_{B_{1/4}}  \abs{\nabla u}^2 \abs{x}^{4-n}\dd x\leq \norm{\abs{x}^{4-n}}_{L^{2n/5}(B_{1/4})} \norm{\nabla u}^2_{L^{q}(B_{1/4})} \leq C \norm{u}^2_{L^1(B_{3/4})} ,
\end{equation}
where we have used that $\abs{x}^{4-n} \in L^{2n/5}(B_{1/4})$ since $n\leq 6$. 

Combining \eqref{bound_on_v_grad_v_with_weights} and \eqref{bound_on_weighted_gradu}, we finally obtain
\begin{equation*}
    \int_{B_{3/16}} \left(v^2+\abs{x}^{2}\abs{\nabla v}^2\right) \abs{x}^{2-n}\dd x\leq C \norm{u}_{L^1(B_{3/4})}^2.
\end{equation*}
This completes the proof of the proposition.
\end{proof}

We are now ready to give our second proof of Theorem \ref{theorem_interior}.
\begin{proof}[Second proof of Theorem \ref{theorem_interior}]
We may assume $3\leq n \leq 6$ since, if $n\leq 2$ then we can add superfluous variables to reduce the problem to the case $n=3$---as in the proof of Theorem 1.2 in \cite{CFRS}. As before, we keep the notation $v=f(u)$. Note that $v$ is a nonnegative $C^2$ function, since both $u$ and $f$ are $C^2$. Let $\zeta\in C^\infty_c(\mathbb{R}^n)$ be a cut-off function with $\supp(\zeta)\subset B_{1/16}$, $0\leq \zeta \leq 1$, and $\zeta=1$ in $B_{3/64}$. We set $\tilde v = v \zeta$, which is defined in all $\mathbb{R}^n$. Denoting by $\phi$ the fundamental solution of the Laplacian in $\mathbb{R}^n$, we consider $w = {\tilde v}^2*\phi$. Then, for $n\geq 3$,
\begin{align}
    w(x) &= c_n \int_{\mathbb{R}^n}  \tilde{v}(y)^2\abs{x-y}^{2-n}\dd y \nonumber\\
     &\leq c_n \int_{B_{1/16}}  v(y)^2 \abs{x-y}^{2-n}\dd y \label{bound_on_w}
\end{align}
for some dimensional constant $c_n>0$.

Notice that, $B_{1/2}(x)\subset B_1$ for every $x\in B_{1/32}$, and thus the function $u_{x}(z):=u(x+z/2)$ is a stable solution of $-\Delta u_x = (1/2)^2 f(u_x)$ in $B_1$. Hence, applying Proposition \ref{proposition_v2_convolution} to the function $u_x$ we obtain
\begin{align*}
    2^2 \int_{B_{3/32 }(x)} v(y)^2 \abs{x-y}^{2-n} \dd y & = \int_{B_{3/16}} f\left(u(x+z/2)\right)^2 \abs{z}^{2-n} \dd z \\
     & \leq  C \norm{u_x}_{L^1(B_{3/4})}^2 =C \norm{u}_{L^1(B_{3/8}(x))}^2
\end{align*}
for every $x \in B_{1/32}$, for some constant $C$ depending only on $n$, $\theta$, and $K$. Therefore, since $B_{1/16}\subset B_{3/32}(x)$ for every $x\in B_{1/32}$, from \eqref{bound_on_w} we get that
\begin{equation}\label{bound_w(x)}
    w(x)\leq C \norm{u}_{L^1(B_1)}^2
\end{equation}
for all $x\in B_{1/32}$.

Now, we define $h(x)=F(u(x))-w(x)$ for $x\in \overline{B}_{3/64}$. Notice that, since
\begin{equation*}
    -\Delta F(u) = - f'(u) \abs{\nabla u}^2 + f(u)^2 \leq v^2= \tilde{v}^2 \quad \text{in }B_{3/64}
\end{equation*}
and $-\Delta w  = \tilde{v}^2$ in $\mathbb{R}^n$, we deduce that $h$ is subharmonic in $B_{3/64}$. Hence, by the mean value property, and since $w\geq 0$, we get that
\begin{equation}\label{bound_h(x)}
    h(x) \leq \frac{1}{\abs{B_{1/64}}}\int_{B_{1/64}(x)} (F(u)-w) \dd x\leq C \norm{F(u)}_{L^1(B_{3/64})}
\end{equation}
for every $x\in B_{1/32}$. Moreover, since $f$ is nondecreasing, we have that $0\leq F(u)\leq f(u)u$, and thus, by Lemma \ref{lemma_estimate_v2},
\begin{align*}
    \norm{F(u)}_{L^1(B_{3/64})}&\leq \norm{f(u)u}_{L^1(B_{3/64})} \\ &\leq C\left(\norm{f(u)}^2_{L^2(B_{3/64})}+\norm{u}^2_{L^2(B_{3/64})}\right) \\ &\leq C\left(\norm{u}^2_{L^1(B_{1})}+\norm{u}^2_{L^2(B_{3/64})}\right).
\end{align*}
By classical elliptic estimates and Lemma \ref{lemma_estimate_v2}, we have
\begin{align*}
\norm{u}_{L^2(B_{3/64})} &\leq C\left( \norm{\Delta u}_{L^2(B_{1/2})}+\norm{u}_{L^1(B_{1/2})}\right)\\
&= C\left( \norm{v}_{L^2(B_{1/2})}+\norm{u}_{L^1(B_{1/2})}\right)\\ &\leq C\norm{u}_{L^1(B_{1})}.
\end{align*}
Hence, from the last two displayed chains of inequalities and \eqref{bound_h(x)} we get
\begin{equation}\label{bound_L1_norm_F(u)}
    h(x)\leq C\norm{F(u)}_{L^1(B_{3/64})} \leq C\norm{u}_{L^1(B_{1})}^2
\end{equation}
for every $x\in B_{1/32}$, for some constant $C$ depending only on $n$, $\theta$, and $K$.

Finally, using \eqref{bound_w(x)}, \eqref{bound_L1_norm_F(u)}, and the fact $F(u)\geq 0$ we obtain
\begin{equation*}
    0\leq F(u(x)) = h(x)+w(x) \leq C \norm{u}_{L^1(B_1)}^2
\end{equation*}
for every $x\in B_{1/32}$. Taking the supremum over all $x\in B_{1/32}$ yields
\begin{equation*}
    \norm{F(u)}_{L^\infty(B_{1/32})}\leq C \norm{u}_{L^1(B_1)}^2.
\end{equation*}
The claim of the theorem then follows by a standard scaling and covering argument.
\end{proof} 

\section{Estimates up to the boundary}\label{sec:boundary}
We prove first the global $L^{\overline{p}}$ estimate, Proposition \ref{proposition_global_Lp}, which will be needed in the proof of Theorem \ref{theorem_global} in dimensions $3\leq n \leq 6$.
\begin{proof}[Proof of Proposition \ref{proposition_global_Lp}]

We denote $\tilde{f}(t) = f(t)-f(0)$. Notice that $\tilde{f}(u)$ is a nonnegative $C^2$ function, since both $u$ and $f$ are $C^2$, and $f'\geq 0$. We will choose $\beta>0$, depending only on $\theta$, satisfying \eqref{choice_beta_global} below. Let $\eta = \tilde{f}(u)^{1+ \beta}$ in \eqref{stability_with_fu_and_eta}. Notice that we have $\eta\vert_{\partial \Omega}=0$, since $u=0$ on $\partial \Omega$. We get
\begin{equation*}
    (1+ \theta) \int_\Omega  \tilde{f}(u)^{2+ 2 \beta}\abs{\nabla f(u)}^2 \dd x \leq  (1+ \beta)^2 \int_\Omega f(u)^{2}\tilde{f}(u)^{2 \beta} \abs{\nabla f(u)}^2\dd x + K \int_\Omega \abs{\nabla u}^2 \tilde{f}(u)^{2+ 2 \beta}\dd x.
\end{equation*}
By Young's inequality, for every $\delta>0$ there exists a constant $C_{\delta}>0$ such that
\begin{equation*}
    f(u)^{2} = \left(\tilde{f}(u)+f(0)\right)^2 \leq (1+ \delta) \tilde{f}(u)^2 + C_{\delta} f(0)^2.
\end{equation*}
This yields,
\begin{multline}\label{estimate_global_ii_before_absorbing}
    \left(1+ \theta\right)\int_\Omega  \tilde{f}(u)^{2+ 2 \beta} \abs{\nabla f(u)}^2\dd x \leq  (1+ \beta)^2 (1+ \delta) \int_\Omega \tilde{f}(u)^{2+2\beta} \abs{\nabla f(u)}^2\dd x \\ + C_{\delta} \int_\Omega \left(f(0)^2\tilde{f}(u)^{2\beta}\abs{\nabla f(u)}^2  + \abs{\nabla u}^2 \tilde{f}(u)^{2+ 2 \beta}\right)\dd x 
\end{multline}
for some constant $C_{\delta}>0$ depending only on $\delta$, $\beta$, and $K$. Since $\theta>0$, for
\begin{equation}\label{choice_beta_global}
    0<\beta < \sqrt{1+ \theta}-1,
\end{equation}
choosing $\delta$ small enough, depending only on $\beta$ and $\theta$, we get that
\begin{equation}\label{theorem_global_ii_estimate_before_holder}
    \int_\Omega  \tilde{f}(u)^{2+ 2 \beta} \abs{\nabla f(u)}^2\dd x \leq C \int_\Omega \left(f(0)^2\tilde{f}(u)^{2\beta}\abs{\nabla f(u)}^2 + \abs{\nabla u}^2 \tilde{f}(u)^{2+ 2 \beta}\right) \dd x
\end{equation}
for some constant $C$ depending only on $\beta$, $\theta$, and $K$.

Now, by Hölder's and Young's inequalities, we have, for every $\varepsilon>0$,
\begin{align*}
    \int_\Omega f(0)^2&\tilde{f}(u)^{2\beta}\abs{\nabla f(u)}^2 \dd x \\ &= \int_\Omega \left(\tilde{f}(u)^{2+2\beta}\abs{\nabla f(u)}^2\right)^{\frac{\beta}{1+ \beta}} \left(f(0)^{2+2 \beta}\abs{\nabla f(u)}^2\right)^{\frac{1}{1+\beta}} \dd x\\
    & \leq \varepsilon \int_\Omega \tilde{f}(u)^{2+2\beta}\abs{\nabla f(u)}^2 \dd x + C_{\varepsilon} \int_\Omega f(0)^{2+2 \beta} \abs{\nabla f(u)}^2\dd x 
\end{align*}
for some constant $C_{\varepsilon}$ depending only on $\varepsilon$ and $\beta$. For $\varepsilon$ small enough, from this and \eqref{theorem_global_ii_estimate_before_holder} we obtain
\begin{equation*}
    \int_\Omega \tilde{f}(u)^{2+ 2 \beta} \abs{\nabla f(u)}^2 \dd x \leq C \int_\Omega \left(f(0)^{2+2 \beta}\abs{\nabla f(u)}^2 + \abs{\nabla u}^2 \tilde{f}(u)^{2+ 2 \beta}\right) \dd x.
\end{equation*}
Then, by Sobolev's inequality, since $\tilde{f}(u)^{1+ \beta} {\nabla f(u)} = \nabla \tilde{f}(u)^{2+ \beta}/(2+ \beta)$, we get
\begin{equation}\label{theorem_global_ii_estimate_for_fu_after_sobolev}
    \norm{\tilde{f}(u)}_{L^{\overline{p}}(\Omega)}^{2+ \beta} \leq C \left(f(0)^{1+\beta}\norm{ \nabla f(u)}_{L^2(\Omega)} + \norm{\abs{\nabla u}\tilde{f}(u)^{1+ \beta}}_{L^2(\Omega)}\right),
\end{equation}
with $\overline{p} = (2+ \beta)2n/(n-2)$ if $n\geq 3$. Note that if $n\leq 2$, the same argument gives that \eqref{theorem_global_ii_estimate_for_fu_after_sobolev} holds for every $\overline{p}\in [1,+\infty)$. The constant $C$ depends only on $\Omega$, $\theta$, $K$, $\beta$, and $\overline{p}$. We next deal with the terms in the right-hand side of \eqref{theorem_global_ii_estimate_for_fu_after_sobolev}.

We begin with the first one. Notice that, by Young's inequality,
\begin{equation}\label{young_to_f0nablaf}
    f(0)^{1+\beta}\norm{ \nabla f(u)}_{L^2(\Omega)} \leq C \left( f(0)^{2+ \beta} + \norm{\nabla f(u)}_{L^2(\Omega)}^{2+ \beta}\right)
\end{equation}
for some $C$ depending only on $\beta$. We claim that
\begin{equation}\label{energy_estimate_for_fu}
    \norm{\nabla f(u)}_{L^2(\Omega)}  \leq C \left(f(0) +  \norm{f'(u){f}(u)}_{L^1(\Omega)} +\norm{\nabla u}_{L^2(\Omega)}\right)
\end{equation}
for some constant $C$ depending only on $\theta$ and $\tilde{K}$. Indeed, this follows from the stability inequality \eqref{stability_cond_with_c_eta} applied with $c=\tilde{f}(u)$ and $\eta=1$. Since
\begin{equation*}
    \Delta \tilde{f}(u) = f''(u)\abs{\nabla u}^2 - f'(u)\tilde{f}(u)-f'(u)f(0),
\end{equation*}
substituting this in the stability inequality \eqref{stability_cond_with_c_eta} yields
\begin{equation*}
    \int_\Omega \tilde{f}(u)f''(u)\abs{\nabla u}^2 \dd x \leq \int_\Omega f'(u)f(0)\tilde{f}(u)\dd x\leq \int_\Omega f'(u)f(0)f(u)\dd x.
\end{equation*}
Using \eqref{equivalent_CR_for_tildef} in the expression above we get
\begin{equation*}
    \norm{\nabla f(u)}_{L^2(\Omega)}^2 \leq  f(0)\norm{f'(u)f(u)}_{L^1(\Omega)} + \tilde{K} \norm{\nabla u}_{L^2(\Omega)}^2.
\end{equation*}
Applying Young's inequality to the product $f(0)\norm{f'(u)f(u)}_{L^1(\Omega)}$ yields \eqref{energy_estimate_for_fu}.

Now, let us address the second term in the right hand side of \eqref{energy_estimate_for_fu}. Using standard elliptic estimates and an interpolation inequality we get
\begin{align}
     \norm{\nabla u}_{L^2(\Omega)} &\leq \norm{u}_{W^{2,2}(\Omega)} \nonumber\\ 
     &\leq C\norm{\Delta u}_{L^2(\Omega)}\nonumber\\
     &= C\norm{f(u)}_{L^2(\Omega)}\nonumber \\ 
     &\leq \varepsilon \norm{f(u)}_{L^{\overline{p}}(\Omega)}+C_{\varepsilon} \norm{f(u)}_{L^1(\Omega)}\label{estimate_for_nabla_u_global}
 \end{align}
for every $\varepsilon>0$, for some $C_\varepsilon$ depending only on $\varepsilon$, $\Omega$, and $\overline{p}$.
To bound the $L^1$ norm of $f(u)$ by that of $f'(u)f(u)$, notice first that
\begin{equation}\label{eq:f(u)_subsol}
    -\Delta {f}(u) = -f''(u)\abs{\nabla u}^2 +f'(u)f(u) \leq  f'(u)f(u) \quad \text{in }\Omega.
\end{equation}
Set $g= f'(u)f(u)$, and let $w$ be the solution of
\begin{equation}\label{eq_for_w_with_f_derf1}
\left\{
    \begin{aligned}
        -\Delta w &= g &&\text{ in }\Omega\\
        w &= 0 &&\text{ on }\partial \Omega.
    \end{aligned}
    \right.
\end{equation}
By elliptic regularity theory (see, for instance, \cite[Proposition 5.1]{Ponce2016}), and since ${f}(u)$ is a nonnegative subsolution of \eqref{eq_for_w_with_f_derf1}, we have
\begin{equation}\label{fbyf'f}
    \norm{f(u)}_{L^1(\Omega)}\leq \norm{w}_{L^1(\Omega)}\leq C \norm{g}_{L^1(\Omega)} = C \norm{f'(u)f(u)}_{L^1(\Omega)},
\end{equation}
for some $C$ depending only on $\Omega$.

As a consequence, from this, \eqref{theorem_global_ii_estimate_for_fu_after_sobolev}, \eqref{young_to_f0nablaf}, \eqref{energy_estimate_for_fu}, and \eqref{estimate_for_nabla_u_global}, taking $\varepsilon$ small enough, and since $f(u)=\tilde{f}(u)+f(0)$, we deduce that
\begin{equation}\label{estimate_for_nabla_fu}
    \norm{\tilde{f}(u)}^{2+\beta}_{L^{\overline{p}}(\Omega)} \leq C \left(f(0)^{2+\beta}+\norm{f'(u)f(u)}_{L^1(\Omega)}^{2+\beta}+\norm{\abs{\nabla u}\tilde{f}(u)^{1+\beta}}_{L^2(\Omega)}\right),
\end{equation}
for some constant $C$ depending only on $\Omega$, $\theta$, $\tilde{K}$, $\beta$, and $\overline{p}$.

Next, we deal with the last term in the right-hand side of \eqref{estimate_for_nabla_fu}. As in the proof of Proposition~\ref{proposition_first_proof}, we pick one $\nu\in \mathbb{R}$ satisfying $0<\nu<\overline{p}-2(2+\beta)$ and let
\begin{equation*}
    q = \left(1 - \frac{2(1+ \beta)}{\overline{p}- \nu}\right)^{-1} = (\overline{p}-\nu)\left(\overline{p}- \nu - 2( 1+ \beta )\right)^{-1}.
\end{equation*}
Notice that $2q \leq \overline{p}- \nu$. Since
\begin{equation*}
    \frac{1}{q}+\frac{2(1+\beta)}{\overline{p}-\nu}=1,
\end{equation*}
using Hölder's inequality and standard elliptic estimates, we obtain 
\begin{align}
    \norm{\abs{\nabla u}^2f(u)^{2(1+ \beta)}}_{L^1(\Omega)} &\leq \norm{\abs{\nabla u}^2}_{L^q(\Omega)}\norm{f(u)^{2(1+ \beta)}}_{L^{(\overline{p}- \nu)/(2+2\beta)}(\Omega)} \nonumber\\  
    &= \norm{\nabla u}_{L^{2q}(\Omega)}^2\norm{{f}(u)}_{L^{\overline{p}- \nu}(\Omega)}^{2(1+ \beta)}\nonumber\\ 
    & \leq C\norm{\nabla u}_{L^{\overline{p}- \nu}(\Omega)}^2\norm{\Delta u}_{L^{\overline{p}- \nu}(\Omega)}^{2(1+ \beta)}\nonumber\\ 
    &\leq C \norm{u}_{W^{2,\overline{p}- \nu}(\Omega)}^{2(2+ \beta)}\nonumber\\ 
    &\leq C \norm{\Delta u}^{2(2+\beta)}_{L^{\overline{p}- \nu}(\Omega)}\label{bound_nablau2_global_proof}
\end{align}
for some $C$ depending only on $\Omega$, $\beta$, and $\overline{p}$. Using an interpolation inequality, we deduce from \eqref{bound_nablau2_global_proof} that, for every $\varepsilon>0$,
\begin{align}
\norm{\abs{\nabla u}^2f(u)^{2(1+ \beta)}}_{L^1(\Omega)} &\leq C \norm{\Delta u}^{2(2+\beta)}_{L^{\overline{p}- \nu}(\Omega)}\nonumber\\
    &\leq \varepsilon\norm{\Delta u}_{L^{\overline{p}}(\Omega)}^{2(2+ \beta)}+ C_{\varepsilon} \norm{\Delta u}_{L^1(\Omega)}^{2(2+ \beta)}\nonumber\\ 
    &= \varepsilon\norm{f(u)}_{L^{\overline{p}}(\Omega)}^{2(2+ \beta)}+ C_{\varepsilon} \norm{f(u)}_{L^1(\Omega)}^{2(2+ \beta)}. \label{theorem_global_ii_estimate_for_nablau_fu}
\end{align}
for some constant $C_\varepsilon >0$ depending only on $\varepsilon$, $\Omega$, and $\overline{p}$.

Putting together \eqref{estimate_for_nabla_fu}, \eqref{theorem_global_ii_estimate_for_nablau_fu}, and \eqref{fbyf'f} we obtain
 \begin{equation*}
     \norm{\tilde{f}(u)}_{L^{\overline{p}}(\Omega)}\leq \varepsilon \norm{f(u)}_{L^{\overline{p}}(\Omega)} + C_{\varepsilon}\left( f(0) + \norm{f'(u)f(u)}_{L^1(\Omega)}\right),
 \end{equation*}
which, taking $\varepsilon=1/2$, and since $f(u)=\tilde{f}(u)+f(0)$, finally yields 
 \begin{equation*}
     \norm{f(u)}_{L^{\overline{p}}(\Omega)}\leq C\left( f(0) + \norm{f'(u)f(u)}_{L^1(\Omega)}\right),
 \end{equation*}
 as desired.
 \end{proof}

With this in hand, we can now turn to the proof of our global estimate, Theorem \ref{theorem_global}.
\begin{proof}[Proof of Theorem \ref{theorem_global}.]
As mentioned in the introduction, our proof relies on the following estimate proved by Ye and Zhou in \cite[eq. (8)]{YeZhou_OnAGeneralFamily},
\begin{equation}\label{estimate_ff'_theorem_global_1}
    \int_\Omega f'(u)f(u) \dd x\leq C
\end{equation}
for some $C$ depending only $f$ and $\Omega$. We include its proof here for completeness.

First, define $\psi(u)=\int_0^u f'(t)^2\dd t$. We multiply equation \eqref{problem_statement} by $\psi(u)$ and integrate by parts to obtain
\begin{equation*}
    \int_\Omega \abs{\nabla u}^2 f'(u)^2 \dd x = \int_\Omega f(u) \psi(u)\dd x.
\end{equation*}
Combining this with the stability inequality \eqref{stability_definition} applied with $\xi = \tilde{f}(u):=f(u)-f(0)$ yields
\begin{equation}\label{theorem_global_1_stability}
    \int_\Omega f'(u)\tilde{f}(u)^2\dd x \leq \int_\Omega \abs{\nabla u}^2 f'(u)^2 \dd x = \int_\Omega f(u) \psi(u)\dd x.
\end{equation}

We now claim that
\begin{equation*}
    \lim_{t\to 1^- }\frac{ f'(t)\tilde{f}(t)^2 - f(t)\psi(t)}{f'(t)f(t)} = +\infty.
\end{equation*}
First, notice that, since $f$ is convex and $\lim_{t\to 1^-}f(t)=+\infty$, then $f'(t) \uparrow \infty$ as $t\uparrow 1$. Now, integrating by parts we obtain
\begin{align}
    f'(t)\tilde{f}(t) - \psi(t) &= f'(t)\left( f(t)-f(0)\right)-\int_0^t f'(s)^2 \dd s\nonumber \\ &= \left(f'(0)-f'(t)\right)f(0) + \int_0^t f''(s)f(s)\dd s. \label{nedev_bound_i}
\end{align}
Moreover, by l'Hôpital's rule,
\begin{equation*}
    \lim_{t\to 1^-} \frac{\int_0^t f''(s)f(s)\dd s }{f'(t)} = + \infty.
\end{equation*}
Therefore, using this, \eqref{nedev_bound_i}, and the fact that $f'(t)\tilde{f}(t)^2 = f'(t)\tilde{f}(t)f(t)-f'(t)\tilde{f}(t)f(0)$ we get
\begin{equation*}
    \lim_{t\to 1^- }\frac{ f'(t)\tilde{f}(t)^2 - f(t)\psi(t)}{f'(t)f(t)} = \lim_{t\to 1^- } \frac{\left(f'(0)-f'(t)\right)f(0) +\int_0^t f''(s)f(s)\dd s }{f'(t)} = + \infty.
\end{equation*}
The bound \eqref{estimate_ff'_theorem_global_1} now follows easily from this and \eqref{theorem_global_1_stability}.

We can now proceed to the proof of our theorem.

\textit{Case (i):} We have $n\leq 2$. Similarly as we did in \eqref{eq:f(u)_subsol}, we note first that
\begin{equation*}
    -\Delta {f}(u) = -f''(u)\abs{\nabla u}^2 +f'(u)f(u) \leq  f'(u)f(u) \quad \text{in }\Omega.
\end{equation*}
Set $g= f'(u)f(u)$, and let $w$ be the solution of
\begin{equation}\label{eq_for_w_with_f_derf}
\left\{
    \begin{aligned}
        -\Delta w &= g &&\text{ in }\Omega\\
        w &= 0 &&\text{ on }\partial \Omega.
    \end{aligned}
    \right.
\end{equation}
By elliptic regularity theory (see, for instance, \cite[Proposition 5.1]{Ponce2016}), since $n\leq 2$, we have
\begin{equation*}
    \norm{w}_{L^p(\Omega)}\leq C \norm{g}_{L^1(\Omega)},
\end{equation*}
for every $p\geq 1$, for some $C$ depending only on $\Omega$ and $p$. Choosing $p=3$, the constant $C$ only depends on $\Omega$. Since ${f}(u)$ is a nonnegative subsolution of \eqref{eq_for_w_with_f_derf}, by the maximum principle, $0\leq f(u)\leq w$ and thus
\begin{equation}\label{fu_in_lp}
    \norm{f(u)^2}_{L^{3/2}(\Omega)}^{1/2}=\norm{f(u)}_{L^3(\Omega)} \leq C \norm{g}_{L^1(\Omega)}=C\norm{f'(u)f(u)}_{L^1(\Omega)}.
\end{equation}
Finally, using that
\begin{equation*}
    -\Delta F(u) = -f'(u)\abs{\nabla u}^2 + f(u)^2 \leq  f(u)^2 \quad \text{in }\Omega,
\end{equation*}
by standard elliptic estimates and \eqref{fu_in_lp}, since $3/2 > n/2$ and $F(u)\geq 0$, we deduce that
\begin{equation*}
    \norm{F(u)}_{L^\infty(\Omega)}\leq C\norm{f(u)^2}_{L^{3/2}\left(\Omega\right)}  \leq C\norm{f'(u)f(u)}^2_{L^{1}\left(\Omega\right)}\leq C,
\end{equation*}
where we have used \eqref{estimate_ff'_theorem_global_1} in the last inequality.

\textit{Case (ii):} We assume $3\leq n \leq 6$. As before, we have
\begin{equation*}
    -\Delta F(u) \leq f(u)^2 \quad \text{in }\Omega.
\end{equation*}
By standard elliptic regularity estimates, since in Proposition \ref{proposition_global_Lp} we have $\overline{p}>n$ and also $f^2\in L^{\overline{p}/2}(\Omega)$, we obtain
\begin{align*}
    \norm{F(u)}_{L^\infty(\Omega)} &\leq C \norm{f(u)}_{L^{\overline{p}}(\Omega)}^2\\ 
    &\leq C \left( f(0)+\norm{f'(u)f(u)}_{L^1(\Omega)}\right)^2 \leq C,
\end{align*}
where, as before, we have used \eqref{estimate_ff'_theorem_global_1} in the last inequality. This concludes the proof of the theorem.
\end{proof}

\begin{remark}\label{rem:tau0global}
As in the case of Proposition \ref{proposition_first_proof}, allowing $0< \underline{\gamma}<1$ (and thus $-1<\theta< \underline{\gamma}-1<0$; see the discussion above \eqref{fminusgamma}) in the proof of Proposition \ref{proposition_global_Lp} leads to $-1<\beta <0$ (see \eqref{choice_beta_global}). Hence, the proposition gives
\begin{equation*}
    \norm{f(u)}_{L^{\overline{p}}(\Omega)} \leq C\left( f(0) + \norm{f'(u)f(u)}_{L^1(\Omega)}\right)
\end{equation*}
with $\overline{p} =(2+ \beta)2n/(n-2)$ for every $\beta \in (-1, \sqrt{1+ \theta} -1)$. As a consequence, Theorem \ref{theorem_global} holds whenever $\overline{p}>n$, i.e., whenever $n < 4 + 2 \sqrt{\underline{\gamma}}$.

Notice, however, that when $\beta<0$, the test function $\eta = \tilde{f}(u)^{1+ \beta}$ is not Lipschitz. Therefore, similarly to Remark \ref{rem:tau0sec2}, we approximate it by the Lipschitz functions
\begin{equation}
    \eta_{\varepsilon} := \left(\tilde{f}(u)^{1+ \beta}- \tilde{f}(\varepsilon)^{1+ \beta}\right)_{+},
\end{equation}
for $\varepsilon \in (0,1)$. Since $\eta_{\varepsilon} \to \eta$ and $\nabla \eta_{\varepsilon} \to \nabla \eta$ almost everywhere in $\Omega$ as $\varepsilon \to 0$, by the dominated convergence theorem, since $u\in C(\overline{\Omega})\cap C^2(\Omega)$, we deduce that \eqref{estimate_global_ii_before_absorbing} also holds with $\eta = \tilde{f}(u)^{1+ \beta}$.

The case $\underline{\gamma}=1$ can be treated as in Remark \ref{rem:tau0sec2}.
\end{remark}

\appendix

\section{On the value of the Crandall-Rabinowitz constant}\label{appendix}

\subsection{On the value of $\underline{\gamma}$ for regular and singular nonlinearities}\label{ap::subsec_value_gamma}

In this subsection, we show that if $f$ is singular and nonintegrable in $[0,1]$, and $\gamma$ exists as a limit, then it must lie in the interval $[1,2]$. In contrast, for regular nonlinearities\footnote{When considering regular nonlinearities the limit defining $\gamma$ is taken when $t\to +\infty$.} we have that $\gamma\in [0,1]$ instead; see \cite[pp. 213-214]{CrandallRabinowitz1975} for its simple proof. 
\begin{proposition}\label{proposition_appendix}
Let $f\in C^2([0,1))$ be a positive function such that
$$
    \lim_{t\to 1^-}f(t)=+\infty.
$$
Then,
$$
    \overline\gamma:=\limsup_{t\to1^-}\frac{f(t)f''(t)}{f'(t)^2}\geq 1.
$$

If, moreover,
$$
    \int_0^1 f(t)\dd t = +\infty,
$$
then,
$$
    \underline\gamma:= \liminf_{t\to 1^-}\frac{f(t)f''(t)}{f'(t)^2}\leq 2.
$$
In particular, if the limit $\gamma$ exists, then it lies in the interval $[1,2]$.
\end{proposition}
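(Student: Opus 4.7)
The plan is to translate each inequality on the Crandall--Rabinowitz quotient $ff''/(f')^2$ into a convexity/concavity statement for a suitable power of $f$, using the identity \eqref{fminusgamma} and its direct analogue for positive powers, and then derive a contradiction from the prescribed behavior at $t=1^-$ (blow-up for $\overline{\gamma}$, nonintegrability for $\underline{\gamma}$). On a left neighborhood $[t_0,1)$ of $1$ we may assume $f'>0$, since the continuous function $f$ blows up at $1$ and any isolated zero of $f'$ satisfies $f''\leq 0$ (respectively $f''\geq 0$) under the hypothesis on $\overline{\gamma}$ (respectively $\underline{\gamma}$), which is all that is needed for the convexity arguments below to remain valid pointwise at such zeros.

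For the bound $\overline{\gamma}\geq 1$ I would argue by contradiction. Assuming $\overline{\gamma}<1$, pick $\varepsilon\in(0,1-\overline{\gamma})$ and compute
\[
(f^{\varepsilon})''(t)=\varepsilon\, f(t)^{\varepsilon-2}\bigl[f(t)f''(t)-(1-\varepsilon)f'(t)^2\bigr].
\]
By definition of limsup the bracket is nonpositive for $t$ sufficiently close to $1$, so $f^{\varepsilon}$ is concave on some $[t_0,1)$. But a concave function on $[t_0,1)$ lies below its tangent at $t_0$, hence is bounded, contradicting $f(t)\to+\infty$.

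For $\underline{\gamma}\leq 2$ I would again argue by contradiction, now using the nonintegrability hypothesis. Assuming $\underline{\gamma}>2$, pick $\varepsilon>0$ with $\underline{\gamma}>2+\varepsilon$ and apply \eqref{fminusgamma} with $\theta=1+\varepsilon$:
\[
(f^{-1-\varepsilon})''(t)=-(1+\varepsilon)\,f(t)^{-3-\varepsilon}\bigl[f(t)f''(t)-(2+\varepsilon)f'(t)^2\bigr].
\]
By definition of liminf, the bracket is nonnegative for $t$ near $1$, so $f^{-1-\varepsilon}$ is concave on some $[t_0,1)$. Since $f(t)\to+\infty$ forces $f^{-1-\varepsilon}(t)\to 0$ as $t\to 1^-$, concavity forces $f^{-1-\varepsilon}$ to lie above the chord joining $(t_0,f^{-1-\varepsilon}(t_0))$ and $(1,0)$:
\[
f^{-1-\varepsilon}(t)\geq \frac{f^{-1-\varepsilon}(t_0)}{1-t_0}\,(1-t)\quad\text{for }t\in[t_0,1).
\]
Inverting yields $f(t)\leq C(1-t)^{-1/(1+\varepsilon)}$ near $1$, which is integrable on $[t_0,1]$ since $1/(1+\varepsilon)<1$, contradicting $\int_0^1 f=+\infty$.

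The main obstacle is not any single deep step but rather the preliminary reduction to an interval where the differential identities can be exploited pointwise: one must confirm that the asymptotic inequalities on the quotient translate into the needed sign of the bracket on a full left neighborhood of $1$, and handle any zeros of $f'$ that may lie there. Both issues are settled by standard limsup/liminf arguments together with the continuity of $f$ and $f'$, after which each part collapses to a one-line convexity argument.
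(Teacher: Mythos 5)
Your proof is correct, and while the first part is essentially the paper's argument in different clothing, the second part is a genuinely cleaner route than the one in the paper. For $\overline\gamma\geq 1$, both proofs exploit the same elementary fact: if $\overline\gamma<1$ then some transform of $f$ is concave near $1$, and a concave function on $[t_0,1)$ is bounded, contradicting the blow-up. The paper works with $(\log f)'$ and observes it is decreasing, hence bounded above, while you work with $f^\varepsilon$ for small $\varepsilon>0$; these are morally the same (indeed $\log f$ is the $\varepsilon\to 0$ limit of $(f^\varepsilon-1)/\varepsilon$). For $\underline\gamma\leq 2$, however, your route is a real simplification. The paper first establishes, by a separate contradiction argument using nonintegrability, a claim that there exists a sequence $t_k\uparrow 1$ with $1-\theta\frac{f'(t_k)}{f(t_k)}(1-t_k)<0$, and then combines this with the concavity of $f^{-\theta}$ via a subtle sign-change argument: the tangent line from $t_k$ must cross zero before $t=1$, so the bound $f(t)\geq f(t_k)(1-\theta\frac{f'(t_k)}{f(t_k)}(t-t_k))^{-1/\theta}$ blows up at a finite $t<1$, a contradiction. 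You instead pass directly from concavity of $g:=f^{-\theta}$ (with $\theta=1+\varepsilon>1$) and positivity to the chord bound $g(t)\geq\frac{1-t}{1-t_0}g(t_0)$, hence $f(t)\leq C(1-t)^{-1/\theta}$, which is integrable — contradiction. This removes the need for the intermediate claim entirely, buying a shorter and more transparent proof. One small remark: your appeal to $f^{-1-\varepsilon}(t)\to 0$ as $t\to 1^-$ is good intuition for why the chord to $(1,0)$ is the right object, but it is not actually needed; the inequality $g(s)\geq\frac{t-s}{t-t_0}\,g(t_0)$ for $s<t<1$ follows from concavity and $g(t)\geq 0$ alone, and letting $t\to 1^-$ already yields the bound. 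Your preliminary observation about zeros of $f'$ is also correct: at such points the hypotheses on the limsup (resp.\ liminf) force $f''\leq 0$ (resp.\ $f''\geq 0$), so the sign of $(f^{\pm\theta})''$ that drives each argument still holds.
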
 
\begin{proof}
Assume that $\overline\gamma<1$. Then, there exists some $t_0$ sufficiently close to $1$ such that
$$
    f(t)f''(t)<\frac{1+\overline\gamma}{2}f'(t)^2 \quad \text{for all } t_0\leq t < 1,
$$
and thus
$$
    \left(\frac{f'}{f}\right)' (t)< \frac{\overline\gamma-1}{2} \left(\frac{f'(t)}{f(t)}\right)^2<0 \quad \text{for all }t_0\leq t < 1.
$$
But this would mean that $(\log f)'=f'/f$ is decreasing near $1$, which is impossible---since in such case, $(\log f)'$ would be bounded from above near~1, and hence $\log f$ and $f$ would not blow up at~1.

To prove the second statement of the proposition, we first claim that, if $f$ is convex, then
\begin{equation}\label{convex_f_der_blows_up}
    \lim_{t\to 1^-} \frac{f(t)}{f'(t)} = 0.
\end{equation}
To see this, note that, by the convexity of $f$, we have, on the one hand, $f'(t)\uparrow +\infty$ as $t\uparrow 1$, and, on the other hand,
\begin{equation*}
    \frac{f(t)-f(s)}{f'(t)}\leq t-s \quad \text{for all } t,s\in [0,1).
\end{equation*}
Letting $t\uparrow 1$ first, and then $s\uparrow 1$, we obtain \eqref{convex_f_der_blows_up}.

Now, assume for contradiction that $\underline{\gamma}>2$. Then, there exists some $t_0$ sufficiently close to $1$ such that
$$
f(t)f''(t) > \frac{2+\underline{\gamma}}{2} f'(t)^2 \quad \text{for all } t_0\leq t < 1
$$
and thus
$$
    g'(t)>\frac{\underline\gamma}{2}g(t)^2 \quad \text{for all } t_0\leq t < 1
$$
with $g(t):=f'(t)/f(t)$. Hence, since $1/g(t)=f(t)/f'(t)\to 0$ as $t\to 1^-$ by \eqref{convex_f_der_blows_up}, integrating the inequality above we have that
\begin{equation*}
    g(t)\leq \frac{2}{\underline{\gamma}}\frac{1}{1-t} \quad \text{for all }t_0\leq t < 1.
\end{equation*}
Integrating again, we see that
\begin{equation*}
    \frac{f(t)}{f(t_0)} \leq \left(\frac{1-t}{1-t_0}\right)^{-2/\underline{\gamma}}.
\end{equation*}
Since $2/\underline{\gamma}<1$, this means that $f$ is integrable near 1, which is impossible. Therefore, we conclude that $\underline{\gamma}\leq 2$.
\end{proof}

\subsection{On the relationship between the constants $\gamma$, $m$, and $q$}\label{ap::subsec_rel_ctts} In this subsection we show the relationship between our constant, $\gamma$, and the related constants, $m$ and $q$ appearing in the introduction; see \eqref{conditions_castorina_b} and \eqref{condition_luoyezhou}, respectively. We restate them here for easier reference:
\begin{equation*}
    \gamma = \lim_{t\to 1^-} \frac{f(t)f''(t)}{f'(t)^2}, \quad m = \lim_{t\to 1^-} \frac{\log f(t)}{\log\frac{1}{1-t}}, \quad \text{and} \quad q = \lim_{t\to 1^-}\frac{f'(t)(1-t)}{f(t)}.
\end{equation*}

We will show that, for convex $f$, if $\gamma$ exists, then $q$ exists too, and 
\begin{equation}\label{g_eq_1+1/q}
    \gamma = 1+\frac{1}{q}.
\end{equation}
Similarly, if $q$ exists, then $m$ exists too, and
\begin{equation}\label{m=q}
    m = q.
\end{equation}
This will follow easily from l'Hopital's rule and \eqref{convex_f_der_blows_up}, already proved above.

To prove \eqref{m=q}, simply notice that, by l'Hopital's rule,
\begin{equation*}
    m  = \lim_{t\to 1^-} \frac{\log f(t)}{\log\frac{1}{1-t}} = \lim_{t\to 1^-}\frac{f'(t)/f(t)}{1/(1-t)} = \lim_{t\to 1^-}\frac{f'(t)(1-t)}{f(t)} = q.
\end{equation*}
Similarly, to see \eqref{g_eq_1+1/q}, we proceed as follows
\begin{equation*}
q= \lim_{t\to 1^-}\frac{1-t}{f(t)/f'(t)} = \lim_{t\to 1^-}\frac{f'(t)^2}{f''(t)f(t)-f'(t)^2} = \lim_{t\to 1^-}\frac{1}{f''(t)f(t)/f'(t)^2-1} = \frac{1}{\gamma -1}.
\end{equation*}

As a consequence of the above and of Proposition \ref{proposition_appendix}, if $\gamma$ exists as a limit, the result of Castorina, Esposito, and Sciunzi referenced in the introduction \cite[Theorem 1.3]{CastorinaEspositoSciunzi2007} shows that $\norm{u}_{L^\infty(\Omega)}<1$ in dimensions $n\leq 6$ (in contrast to $n\leq 5$ in the case that the full limit $\gamma$ does not exist). Indeed, they show that $\norm{u}_{L^\infty(\Omega)}<1$ whenever $n<N^\#$, with $$N^\# = \frac{2}{1+\frac{1}{\underline{m}}} \left( \underline{\gamma} + 2 + 2\sqrt{\underline{\gamma}} + \left(\underline{\gamma}-1-\frac{1}{\underline{m}}\right)_{-}\right).$$ Notice that $N^\#\geq  \underline{\gamma} + 2 + 2\sqrt{\underline{\gamma}} > 5 $ whenever $\underline{\gamma}>1$ and $\underline{m}\geq 1$, whereas if $\gamma$ and $m$ exist as limits, then $\gamma = 1+ 1/m$ and thus, since ${\gamma}\leq 2$, we have $N^\# = 2 + 4/\gamma+4/\sqrt{\gamma}\geq 4+2\sqrt{2} > 6$.
\subsection{On the classes of nonlinearities with $\underline{\gamma}>1$ or $q>0$}\label{ap::subsec_g_vs_q}

Recall that $q$, defined by \eqref{condition_luoyezhou}, appears in the main assumption of the paper \cite{LuoYeZhou2011} by Luo, Ye, and Zhou. In this subsection we show that within the class of nondecreasing, convex, and nonintegrable nonlinearities there are functions satisfying $\underline{\gamma}>1$ for which the limit $q$ defined in \eqref{condition_luoyezhou} does not exist, and, conversely, functions satisfying $q\geq 1$ but for which $\underline{\gamma}=0$.

We begin with the latter case, which is simpler. Let us consider, for example, the function~$\tilde{f}$ corresponding to the linear interpolation of the function $t\mapsto 1/(1-t)$ along the sequence $t_m = 1-1/m$. One can then check that $q=1$, but clearly $\underline{\gamma}=0$. However, $\tilde{f}$ is not $C^2$ in $[0,1)$, therefore we must smooth it out around each $t_m$. We pick, for every $m\geq 1$, $\varepsilon_m >0$ small enough such that, denoting $t_m^\pm = t_m\pm \varepsilon_m$, we have $t_m^+<t_{m+1}^-$. We also choose a nondecreasing smooth convex function $g_m$ in $[t_m^-,t_m^+]$ such that
\begin{equation*}
    g^{(k)}_m(t_m^-)=\tilde{f}^{(k)}(t_m^-)\quad\text{and}\quad g^{(k)}_m(t_m^+)=\tilde{f}^{(k)}(t_m^+) \quad \text{for } k=0,1,2,
\end{equation*}
with the understanding that these quantities are $0$ for $k=2$. We then define $f(t)=g_m(t)$ for $t\in [t_m^-,t_m^+]$ for every $m\geq 1$, and $f(t) = \tilde{f}(t)$ elsewhere. Notice that, since $g_m$ is convex,
\begin{equation*}
    \frac{\tilde{f}'(t_m^-)}{\tilde{f}(t_m^+)}(1-t_m^+)\leq \frac{{f}'(t)}{{f}(t)} (1-t)\leq \frac{\tilde{f}'(t_m^+)}{\tilde{f}(t_m^-)}(1-t_m^-) \quad \text{for every }t\in [t_m^-,t_m^+],
\end{equation*}
and, as a consequence,
\begin{equation*}
    \lim_{t\to 1^-}\frac{f'(t)(1-t)}{f(t)}=\lim_{t\to 1^-}\frac{\tilde{f}'(t)(1-t)}{\tilde{f}(t)}=1.
\end{equation*}

In contrast, it is not straightforward to find a convex nonlinearity for which the full limit $q$ does not exist. An example can be found in \cite[Remark 1.4]{CastorinaEspositoSciunzi2007}: for $\varepsilon>0$ and $0<a<b$, consider the function $f(t) = (1-t)^{-h(t)}$, with
\begin{equation}\label{castorina_counterexample}
    h(t) = \frac{b-a}{2}\sin\left( \varepsilon \log\left[ 1+ \log\left[1+\log\left[\frac{1}{1-t}\right]\right]\right]\right)+\frac{b+a}{2}.
\end{equation}
As shown in \cite[Remark 1.4]{CastorinaEspositoSciunzi2007} (and we have verified as well), for $\varepsilon$ small enough, $f$ is increasing, convex, and satisfies
\begin{equation*}
    \underline{\gamma} = 1+\frac{1}{b}>1, \quad \text{and}\quad \underline{q} =\underline{m} = a<b = \overline{m}  =\overline{q} .
\end{equation*}
Notice, in particular, that the limit defining $q$ does not exist.

To be in our setting, it remains to check that $f$ is nonintegrable in $[0,1]$. We next show that $\int_0^1 f(t)\dd t = +\infty$ whenever $b> 1$. First, set $s=-\log(1-t)$. Changing variables we have
\begin{equation}\label{integral_fu_01_app}
    \int_0^1 (1-t)^{-h(t)} \dd t = \int_0^\infty e^{s \left(\tilde{h}(s)-1\right)}\dd s,
\end{equation}
where
\begin{equation*}
    \tilde{h}(s) = \frac{b-a}{2}\sin\left( \varepsilon \log\left[ 1+ \log\left[1+s\right]\right]\right)+\frac{b+a}{2}.
\end{equation*}
We now claim that, for $b> 1$, the set
\begin{equation*}
    \mathcal{S}=\{ s>0 : \tilde{h}(s)-1 \geq 0 \}
\end{equation*}
has infinite measure. To see this, notice that if $b>1$, there is an increasing sequence $\{\tilde{s}_i\}\uparrow +\infty$ with $\abs{\tilde{s}_{2k+1}-\tilde{s}_{2k}}=c>0$ independent of $k$ such that, for every $k=0,1,2,\dots$,
\begin{equation*}
        \frac{b-a}{2}\sin\tilde s+\frac{b+a}{2}-1\geq 0 \quad \text{ if }\tilde s\in [\tilde{s}_{2k},\tilde{s}_{2k+1}].
\end{equation*}
Denoting $s_i = \exp(\exp(\tilde{s}_i/\varepsilon)-1)-1$, since the function $s\mapsto \exp(\exp(s/\varepsilon)-1)-1$ is increasing and convex for $s>0$, we have that $\abs{s_{2k+1}-s_k} \to +\infty$ as $k\to \infty$. Therefore, the set $\mathcal{S}$ has infinite measure, and, since $\mathrm{exp}(s(\tilde{h}(s)-1))\geq 1$ for every $s\in \mathcal{S}$, the integral in \eqref{integral_fu_01_app} diverges.

\section*{Statements and Declarations}
\subsection*{Conflict of interests.} The authors have no relevant competing interests to disclose.
\subsection*{Data availability.} No data have been created in the elaboration of this paper.

\end{document}